\title{
    Families of fast elliptic curves 
    from \(\QQ\)-curves 
}
\author{Benjamin Smith}
\institute{
    Team GRACE, INRIA Saclay--\^Ile-de-France
    \\
    \emph{and}
    Laboratoire d'Informatique de l'\'Ecole polytechnique
    (LIX)
    \\
    B\^atiment Alan Turing, 1 rue Honor\'e d'Estienne d'Orves
    \\
    Campus de l'\'Ecole polytechnique, 
    91120 Palaiseau, France
}
\date{\today}
\numberwithin{equation}{section}
\newcommand{\ZZ}{\mathbb{Z}}
\newcommand{\QQ}{\mathbb{Q}}
\newcommand{\QQbar}{\overline{\mathbb{Q}}}
\newcommand{\Kbar}{\overline{K}}
\newcommand{\FF}{\mathbb{F}}
\newcommand{\FFbar}{\overline{\mathbb{F}}}
\newcommand{\EC}{\mathcal{E}}
\newcommand{\ECK}{\widetilde{\mathcal{E}}}
\newcommand{\phiK}{\widetilde{\phi}}
\newcommand{\G}{\mathcal{G}}
\newcommand{\Lattice}{\mathcal{L}}
\newcommand{\End}{\mathrm{End}}
\newcommand{\Gal}{\mathrm{Gal}}
\newcommand{\trace}[1]{{\mathrm{tr}({#1})}}
\newcommand{\subgrp}[1]{\langle{#1}\rangle}
\newcommand{\roundoff}[1]{\lfloor{#1}\rceil}
\newcommand{\bigroundoff}[1]{\big\lfloor{#1}\big\rceil}
\newcommand{\conj}[2][\sigma]{{{}^{#1}\!\!{#2}}}
\newcommand{\dualof}[1]{{{#1}^{\dagger}}}
\newcommand{\twist}[2]{#2^{#1}}
\newcommand{\twistiso}[1]{\delta({#1})}
\newcommand{\Legendre}[2]{\left({#1}\big/{#2}\right)}
\newcommand{\Oh}{O}
\renewcommand{\vec}[1]{\mathbf{#1}}
\begin{document}
\maketitle

\begin{abstract}
    We construct new families of elliptic curves over \(\FF_{p^2}\)
    with efficiently computable endomorphisms,
    which can be used
    to accelerate elliptic curve-based cryptosystems
    in the same way as Gallant--Lambert--Vanstone (GLV) 
    and Galbraith--Lin--Scott (GLS) endomorphisms.
    Our construction is based on reducing \(\QQ\)-curves---curves over 
    quadratic number fields without complex multiplication,
    but with isogenies to their Galois conjugates---modulo
    inert primes.  
    As a first application of the general theory we construct, for every \(p > 3\),
    two one-parameter families of elliptic curves over
    \(\FF_{p^2}\) equipped with endomorphisms
    that are faster than doubling.
    Like GLS (which appears as a degenerate case of our construction), 
    we offer the advantage over GLV of selecting from 
    a much wider range of curves, and thus finding secure group orders 
    when \(p\) is fixed.
    Unlike GLS, we also offer the possibility of constructing
    twist-secure curves.
    Among our examples are prime-order curves equipped with fast
    endomorphisms, with almost-prime-order twists, over \(\FF_{p^2}\)
    for \(p = 2^{127}-1\) and \(p = 2^{255}-19\).
\end{abstract}

\begin{keywords}
    Elliptic curve cryptography, endomorphisms, GLV, GLS, exponentiation,
    scalar multiplication, \(\QQ\)-curves.
\end{keywords}

\section{
    Introduction
}

Let \(\EC\) be an elliptic curve over a finite field \(\FF_{q}\),
and let \(\G \subset \EC(\FF_{q})\)
be a cyclic subgroup of prime order \(N\).
When implementing cryptographic protocols using \(\G\),
the fundamental operation is
\emph{scalar multiplication}
(or \emph{exponentiation}):
\begin{center}
    Given \(P\) in \(\G\) and \(m\) in \(\ZZ\),
    compute 
    \([m]P := \underbrace{P\oplus\cdots\oplus P}_{m \text{ times}} \).
\end{center}

The literature on general scalar multiplication algorithms is vast, 
and we will not explore it in detail here 
(see~\cite[\S2.8,\S11.2]{Galbraith} 
and~\cite[Chapter 9]{Handbook}
for introductions to exponentiation and 
multiexponentiation algorithms).
For our purposes,
it suffices to note that
the dominant factor in scalar multiplication time
using conventional algorithms
is the bitlength of \(m\).
As a basic example,
if \(\G\) is a generic cyclic abelian group,
then we may
compute \([m]P\)
using a variant of the binary method,
which requires at most \(\lceil\log_2 m\rceil\) doublings
and (in the worst case) about as many addings
in~\(\G\).

But elliptic curves are not generic groups:
they have a rich and concrete geometric structure,
which should be exploited for fun and profit.
For example,
endomorphisms of elliptic curves may be used to accelerate
generic scalar multiplication algorithms,
and thus to accelerate basic operations in curve-based cryptosystems.

Suppose \(\EC\) is 
equipped with an efficient
endomorphism~\(\psi\), defined over \(\FF_{q}\).
By \emph{efficient},
we mean that we can compute the image \(\psi(P)\)
of any point \(P\) in \(\EC(\FF_{q})\) 
for the cost of \(O(1)\) operations in~\(\FF_{q}\).
In practice, we want this to cost no more than a few doublings in
\(\EC(\FF_{q})\).

Assume \(\psi(\G) \subseteq \G\),
or equivalently,
that \(\psi\) restricts to an endomorphism of \(\G\).\footnote{
    This assumption is satisfied almost by default
    in the context of old-school discrete log-based
    cryptosystems.
    If \(\psi(\G) \not\subseteq \G\),
    then \(\EC[N](\FF_{q}) = \G+\psi(\G) \cong (\ZZ/N\ZZ)^2\),
    so \(N^2\mid\#\EC(\FF_{q})\)
    and \(N\mid q-1\);
    such \(\EC\) are cryptographically inefficient,
    and discrete logs in \(\G\) are vulnerable to the
    Menezes--Okamoto--Vanstone reduction~\cite{MOV}.
    However, these \(\G\) do arise naturally in pairing-based
    cryptography; in that context the assumption
    should be verified carefully.
}
Now \(\G\) is a finite cyclic group, isomorphic to \(\ZZ/N\ZZ\);
and every endomorphism of \(\ZZ/N\ZZ\) is just an integer
multiplication modulo~\(N\).
Hence, \(\psi\) acts on \(\G\) as multiplication by some integer eigenvalue
\(\lambda_\psi\): that is,
\[
    \psi|_\G = [\lambda_\psi]_\G 
    .
\]
The eigenvalue \(\lambda_\psi\) must be a root in \(\ZZ/N\ZZ\)
of the characteristic polynomial of \(\psi\).

Returning to the problem of scalar multiplication:
we want to compute \([m]P\).
Rewriting \(m\) as
\[
    m = a + b\lambda_\psi \pmod N
\]
for some \(a\) and \(b\),
we can compute \([m]P\) using the relation
\[
    [m]P = [a]P + [b\lambda_\psi]P = [a]P + [b]\psi(P) 
\]
and a two-dimensional multiexponentation
such as Straus's algorithm~\cite{Straus},
which requires has a loop length of \(\log_2\|(a,b)\|_\infty\)
(ie, \(\log_2\|(a,b)\|_\infty\) doubles and as many adds;
recall that \(\|(a,b)\|_\infty = \max(|a|,|b|)\)).
If \(\lambda_\psi\) is not too small,
then we can easily find \((a,b)\)
such that \(\log_2\|(a,b)\|_\infty\)
is roughly half of \(\log_2N\).
(We remove the ``If'' and the
``roughly'' for our \(\psi\) in \S\ref{sec:decompositions}.)
The endomorphism lets us 
replace conventional \(\log_2N\)-bit scalar multiplications 
with \(\frac{1}{2}\log_2N\)-bit multiexponentiations.
In terms of basic binary methods,
we are halving the loop length,
cutting the number of doublings in half.

Of course, in practice we are not halving the execution time.
The precise speedup ratio 
depends on a variety of factors,
including
the choice of exponentiation and multiexponentiation
algorithms, the cost of computing \(\psi\), 
the shortness of \(a\) and \(b\) on the average,
and the cost of doublings and addings in terms of bit operations---to 
say nothing of the cryptographic protocol,
which may prohibit some other conventional speedups.
For example: 
in~\cite{GLS}, Galbraith, Lin, and Scott 
report experiments where cryptographic operations
on GLS curves required between 70\% and 83\% of the time required for 
the previous best practice curves---with the variation
depending on the architecture, 
the underyling point arithmetic, and the protocol.

To put this technique into practice,
we need a source of cryptographic elliptic curves 
equipped with efficient endomorphisms.
To date, in the large characteristic case\footnote{
    We are primarily interested in the large characteristic case,
    where \(q = p\) or \(p^2\);
    so we will not discuss \(\tau\)-adic/Frobenius expansion-style
    techniques here.
},
there have been essentially only two constructions:
\begin{enumerate}
    \item The classic \emph{Gallant--Lambert--Vanstone} (GLV)
        construction~\cite{GLV}.
        Elliptic curves over number fields
        with explicit complex multiplication by 
        CM-orders with small discriminants
        are reduced modulo suitable primes~\(p\);
        an explicit endomorphism on the CM curve
        reduces to an efficient endomorphism over the finite field.
    \item The more recent 
        \emph{Galbraith--Lin--Scott} (GLS) construction~\cite{GLS}.
        Here, curves over \(\FF_p\) are viewed over \(\FF_{p^2}\);
        the \(p\)-power sub-Frobenius induces an extremely efficient
        endomorphism on the quadratic twist
        (which can have prime order).
\end{enumerate}

These two constructions have since been 
combined to give 3- and 4-dimensional variants~\cite{Longa--Sica,ZHXS},
and extended to hyperelliptic curves 
in a variety of ways~\cite{BCHL,Kohel--Smith,SCQ,Takashima}.
However, basic GLV and GLS remain the archetypal constructions.

\medskip
\noindent
\underline{\emph{Our contribution: new families of endomorphisms.}}\ \ 
In this work,
we propose a new source of elliptic curves over \(\FF_{p^2}\)
with efficient endomorphisms: quadratic \(\QQ\)-curves.
\begin{definition}
    \label{def:QQ-curves}
    A \emph{quadratic \(\QQ\)-curve of degree \(d\)}
    is an elliptic curve \(\EC\) 
    \emph{without} complex multiplication,
    defined over a quadratic number field
    \(K\),
    such that there exists an isogeny of degree \(d\)
    from \(\EC\) to its Galois conjugate \(\conj{\EC}\),
    where \(\subgrp{\sigma} = \Gal(K/\QQ)\).
    (The Galois conjugate \(\conj{\EC}\)
    is the curve formed by applying \(\sigma\) to all of the
    coefficients of \(\EC\).)
\end{definition}

\(\QQ\)-curves are well-established objects of interest in number
theory, where they have formed a natural setting for 
generalizations of the Shimura--Taniyama conjecture.
Ellenberg's survey~\cite{Ellenberg}
gives an excellent introduction to this beautiful theory.
 
Our application of quadratic \(\QQ\)-curves is rather more prosaic:
given a \(d\)-isogeny \(\ECK \to \conj{\ECK}\) over a quadratic field,
we reduce modulo an inert prime \(p\)
to obtain an isogeny \(\EC\to\conj{\EC}\) over \(\FF_{p^2}\).
We then exploit the fact that the \(p\)-power Frobenius isogeny
maps \(\conj{\EC}\) back onto \(\EC\);
composing with the reduced \(d\)-isogeny,
we obtain an endomorphism of \(\EC\)
of degree \(dp\).
For efficiency reasons,
\(d\) must be small;
it turns out that for small values of \(d\),
we can write down one-parameter families of \(\QQ\)-curves
(our approach below was inspired by the explicit techniques of
Hasegawa~\cite{Hasegawa}).
We thus obtain one-parameter families of elliptic curves 
over~\(\FF_{p^2}\) equipped with efficient non-integer endomorphisms.
For these endomorphisms
we can give convenient explicit formul\ae{} 
for short scalar decompositions (see \S\ref{sec:decompositions}).

For concrete examples,
we concentrate on the cases \(d = 2\) and \(3\) (in
\S\ref{sec:degree-2} and~\S\ref{sec:degree-3}, respectively),
where the endomorphism is more efficient than a single
doubling (we briefly discuss higher degrees in~\S\ref{sec:further}).
For maximum generality and flexibility,
we define our curves in short Weierstrass form;
but we include
transformations to 
Montgomery, 
twisted Edwards,
and
Doche--Icart--Kohel
models where appropriate in~\S\ref{sec:models}.

\smallskip
\noindent
\underline{\emph{Comparison with GLV.}}\ \ 
Like GLV, our method involves reducing curves defined over number fields
to obtain curves over finite fields with explicit complex multiplication.
However, we emphasise a profound difference:
in our method, the curves over the number fields 
generally \emph{do not have complex multiplication themselves}.

GLV curves are necessarily isolated examples---and the really
useful examples are extremely limited in number
(see~\cite[App.~A]{Longa--Sica} for a list of curves).
The scarcity of GLV curves\footnote{
    The scarcity of useful GLV curves is easily explained:
    efficient \emph{separable} endomorphisms have
    extremely small degree (so that the dense defining polynomials
    can be evaluated quickly).
    But the degree of the endomorphism is the norm of the corresponding element
    of the CM-order; and to have non-integers of very small norm, 
    the CM-order must 
    have a tiny discriminant.
    Up to twists, the number of elliptic curves 
    with CM discriminant \(D\) is 
    the Kronecker class number \(h(D)\),
    which is in \(\Oh(\sqrt{D})\).
    Of course, for the tiny values of \(D\) in question,
    the asymptotics of \(h(D)\) are irrelevant;
    for the six \(D\) corresponding to endomorphisms of degree at
    most 3, we have \(h(D) = 1\),
    so there is only one \(j\)-invariant.
    For \(D = -4\) (corresponding to \(j = 1728\))
    there are two or four twists over \(\FF_p\);
    for \(D = -3\) (corresponding to \(j = 0\))
    we have two or six,
    and otherwise we have only two.
    In particular,
    there are at most 18 distinct curves over \(\FF_p\)
    with a non-integer endomorphism of degree at most 3.
}
is their Achilles' heel:
as noted in~\cite{GLS},
if~\(p\) is fixed
then there is no guarantee that there will exist a GLV curve 
with prime (or almost-prime) order over~\(\FF_{p}\).
Consider the situation discussed
in~\cite[\S 1]{GLS}:
the most efficient GLV curves have CM discriminants \(-3\) and \(-4\).
If we are working at a 128-bit security level,
then the choice \(p = 2^{255}-19\)
allows particularly fast arithmetic in \(\FF_p\).
But the largest prime factor of the order of a curve
over \(\FF_p\)
with CM discriminant \(-4\) (resp.~\(-3\))
has \(239\) (resp.~\(230\)) bits:
using these curves wastes 9 (resp.~13) potential bits of security.
In fact, we are lucky with \(D = -3\) and \(-4\):
for all of the other discriminants offering endomorphisms of degree at most 3,
we can do no better than a 95-bit prime factor,
which represents a catastrophic 80-bit loss of relative security.

In contrast, our construction yields true families of curves,
covering \(\sim p\) isomorphism classes over \(\FF_{p^2}\).
This gives us
a vastly higher probability of finding
prime (or almost-prime)-order curves over practically important fields.

\smallskip
\noindent
\underline{\emph{Comparison with GLS.}}\ \ 
Like GLS, we construct curves over \(\FF_{p^2}\)
equipped with an inseparable endomorphism.
While these curves are not defined over the prime field,
the fact that the extension degree is only 2 means that Weil descent
attacks offer no advantage when solving DLP instances
(see~\cite[\S9]{GLS}).
And like GLS, our families offer around \(p\) distinct isomorphism
classes of curves, making it easy to find secure group orders
when \(p\) is fixed.

But unlike GLS, our curves have \(j\)-invariants in \(\FF_{p^2}\):
they are not isomorphic to or twists of subfield curves.
This allows us to find twist-secure curves,
which are resistant to the Fouque--Lercier--R\'eal--Valette 
fault attack~\cite{Fouque--Lercier--Real--Valette}.
As we will see in \S\ref{sec:GLS},
our construction reduces to GLS in the degenerate case \(d = 1\)
(that is, where \(\phiK\) is an isomorphism).
Our construction is therefore a sort of 
generalized GLS---though it is not the higher-degree generalization 
anticipated by Galbraith, Lin, and Scott themselves,
which composes the sub-Frobenius with a non-rational separable
homomorphism and its dual homomorphism
(cf.~\cite[Theorem~1]{GLS}).

In \S\ref{sec:decompositions},
we prove that we can immediately obtain decompositions 
of the same bitlength as GLS for curves over the same fields:
the decompositions produced by
our Proposition~\ref{prop:bitlength}
are identical to the GLS decompositions 
of~\cite[Lemma~2]{GLS} when \(d = 1\),
up to sign.
For this reason, we do not provide extensive implementation details in
this paper:
while our endomorphisms cost a few more \(\FF_{q}\)-operations to
evaluate than the GLS endomorphism, this evaluation is typically carried
out only once per scalar multiplication. 
This evaluation is the only difference between a GLS scalar
multiplication and one of ours:
the subsequent multiexponentiations have exactly the same length 
as in GLS, and the underlying curve and field arithmetic is the same, too.

\section{
    Notation and conventions
}

Throughout, we work over fields of characteristic not 2 or 3.
Let 
\[
    \EC: y^2 = x^3 + a_2x^2 + a_4x + a_6
\]
be an elliptic curve over such a field \(K\).

\smallskip
\noindent
\underline{\emph{Galois conjugates.}}\ \ 
For every automorphism \(\sigma\) of \(K\),
we define the conjugate curve 
\[
    \conj{\EC}: y^2 = x^3 + \conj{a_2}x^2 + \conj{a_4}x + \conj{a_6} .
\]
If \(\phi: \EC\to\EC_1\) is an isogeny,
then 
we obtain a conjugate isogeny
\(\conj{\phi}:\conj{\EC}\to\conj{\EC_1}\)
by
applying \(\sigma\) 
to the defining equations of \(\phi\), \(\EC\), and \(\EC_1\).

\smallskip
\noindent
\underline{\emph{Quadratic twists.}}\ \ 
For every \(\lambda \not=0\) in \(\overline{K}^\times\),
we define a twisting isomorphism
\[
    \twistiso{\lambda}:
    \EC
    \longrightarrow
    \twist{\lambda}{\EC}: 
    y^2 = x^3 + \lambda^2a_2x^2 + \lambda^4a_4x + \lambda^6a_6
\]
by
\[
    \twistiso{\lambda}
    : 
    (x,y) 
    \longmapsto
    (\lambda^2 x, \lambda^3 y) 
    .
\]
The twist \(\twist{\lambda}{\EC}\) is defined over
\(K(\lambda^2)\), 
and \(\twistiso{\lambda}\) is defined over \(K(\lambda)\).
For every \(K\)-endo\-morphism \(\psi\) of \(\EC\),
there is a \(K(\lambda^2)\)-endomorphism
\(
    \twist{\lambda}{\psi} 
    =
    \twistiso{\lambda}\psi\twistiso{\lambda^{-1}}
\) 
of
\(\twist{\lambda}{\EC}\).
Observe that 
\(
    \twistiso{\lambda_1}\twistiso{\lambda_2}
    =
    \twistiso{\lambda_1\lambda_2}
\)
for any \(\lambda_1,\lambda_2\) in \(K\),
and 
\(\twistiso{-1} = [-1]\).
Also,
\(\conj{(\twist{\lambda}{\EC})} =
\twist{\conj{\lambda}}{\conj{\EC}}\)
for all automorphisms \(\sigma\) of \(\Kbar\).

If \(\mu\) is a nonsquare in \(K\),
then \(\twist{\sqrt{\mu}}{\EC}\) is called a \emph{quadratic twist}.
If \(K = \FF_{q}\), then \(\twist{\sqrt{\mu_1}}{\EC}\) and 
\(\twist{\sqrt{\mu_2}}{\EC}\) are \(\FF_{q}\)-isomorphic 
for all nonsquares \(\mu_1\), \(\mu_2\) in \(\FF_{q}\)
(the isomorphism \(\twistiso{\sqrt{\mu_1/\mu_2}}\) is defined over
\(\FF_{q}\) because \(\mu_1/\mu_2\) must be a square).
When the choice of nonsquare is not important,
we denote the quadratic twist by~\(\EC'\).
Similarly,
if \(\psi\) is an \(\FF_{q}\)-endomorphism of \(\EC\),
then \(\psi'\)
is the corresponding \(\FF_{q}\)-endomorphism of \(\EC'\).
(\emph{Conjugates are marked by left-superscripts,
twists by right-superscripts}.)

\smallskip
\noindent
\underline{\emph{The trace.}}\ \ 
If \(K = \FF_{q}\),
then \(\pi_{\EC}\) denotes 
the \(q\)-power Frobenius endomorphism of \(\EC\).
Recall that
the characteristic polynomial
of \(\pi_{\EC}\) has the form
\[
    \chi_{\EC}(T) = T^2 - \trace{\EC}T + q 
    ,
    \qquad \text{with} \qquad 
    |\trace{\EC}| \le 2\sqrt{q}
    .
\]
The integer
\(\trace{\EC}\) is the \emph{trace} of \(\EC\);
we have
\(\#\EC(\FF_{q}) = q + 1 - \trace{\EC}\)
and \(\trace{\EC'} = -\trace{\EC}\).

\smallskip
\noindent
\underline{\emph{\(p\)-th powering.}}\ \ 
We write \((p)\) for 
the \(p\)-th powering automorphism of \(\FFbar_p\). 
Note that \((p)\) is almost trivial to compute
on \(\FF_{p^2} = \FF_{p}(\sqrt{\Delta})\),
because
\( \conj[(p)]{(a + b\sqrt{\Delta})} = a - b\sqrt{\Delta} \)
for all \(a\) and \(b\) in \(\FF_{p}\).

\section{
    Quadratic \(\QQ\)-curves and their reductions
}
\label{sec:construction}

Suppose \(\ECK/\QQ(\sqrt{\Delta})\) is a quadratic \(\QQ\)-curve 
of prime degree \(d\) (cf.~Definition~\ref{def:QQ-curves}),
where \(\Delta\) is a discriminant prime to \(d\),
and let
\(\phiK: \ECK \to \conj{\ECK}\)
be the corresponding \(d\)-isogeny.
In general, \(\phiK\) is only defined over a quadratic
extension \(\QQ(\sqrt{\Delta},\gamma)\)
of \(\QQ(\sqrt{\Delta})\).
We can compute \(\gamma\) from \(\Delta\) and \(\ker\phiK\)
using~\cite[Proposition 3.1]{Gonzalez}, but after a suitable twist
we can always reduce to the case where 
\(\gamma = \sqrt{\pm d}\)
(see~\cite[remark after Lemma~3.2]{Gonzalez}).
The families of explicit \(\QQ\)-curves of degree~\(d\) 
that we treat below
have their isogenies defined over \(\QQ(\sqrt{\Delta},\sqrt{-d})\);
so to simplify matters, from now on we will 
\begin{center}
    Assume \(\phiK\) is defined over \(\QQ(\sqrt{\Delta},\sqrt{-d})\).
\end{center}

Let \(p\) be a prime of good reduction for \(\ECK\)
that is inert in \(\QQ(\sqrt{\Delta})\) and prime to \(d\).
If \(\mathcal{O}_\Delta\) is the ring of integers of \(\QQ(\sqrt{\Delta})\),
then 
\[
    \FF_{p^2} = \mathcal{O}_\Delta/(p) = \FF_p(\sqrt{\Delta}) .
\]
Looking at the Galois groups of our fields, we have a series of
injections
\[
    \subgrp{(p)}
    =
    \Gal(\FF_p(\sqrt{\Delta})/\FF_p) 
    \hookrightarrow 
    \Gal(\QQ(\sqrt{\Delta})/\QQ) 
    \hookrightarrow
    \Gal(\QQ(\sqrt{\Delta},\sqrt{-d})/\QQ)
    .
\]
The image of \((p)\) in \(\Gal(\QQ(\sqrt{\Delta})/\QQ)\) is \(\sigma\),
because \(p\) is inert in \(\QQ(\sqrt{\Delta})\).
When extending \(\sigma\) to 
an automorphism of \(\QQ(\sqrt{\Delta},\sqrt{-d})\),
we extend it to be the image of \((p)\):
that is,
\begin{equation}
    \label{eq:FFp2-conj}
    \conj{\left(
        \alpha + \beta\sqrt{\Delta} + \gamma\sqrt{-d} + \delta\sqrt{-d\Delta}
    \right)} 
    = 
    \alpha - \beta\sqrt{\Delta} 
        + \Legendre{-d}{p}\left(\gamma\sqrt{-d} - \delta\sqrt{-d\Delta}\right)
\end{equation}
for all \(\alpha, \beta, \gamma\), and \(\delta \in \QQ\).
(Recall that the Legendre symbol \(\Legendre{n}{p}\)
is \(1\) if \(n\) is a square mod \(p\), \(-1\) if \(n\) is not a square
mod \(p\), and \(0\) if \(p\) divides \(n\).)

Now let \(\EC\)
be the reduction modulo \(p\) of \(\ECK\).
The curve \(\conj{\ECK}\) reduces to \(\conj[(p)]{\EC}\),
while the \(d\)-isogeny \(\phiK: \ECK\to\conj{\ECK}\)
reduces to a \(d\)-isogeny
\(\phi: \EC \to \conj[(p)]{\EC}\)
defined over \(\FF_{p^2}\).

Applying \(\sigma\) to \(\phiK\),
we obtain a second \(d\)-isogeny
\(\conj{\phiK}:\conj{\ECK}\to\ECK\)
travelling in the opposite direction,
which reduces mod \(p\) to
a conjugate isogeny \(\conj[(p)]{\phi}: \conj[(p)]{\EC} \to \EC\)
over~\(\FF_{p^2}\).
Composing \(\conj{\phiK}\) with \(\phiK\)
yields endomorphisms \(\conj{\phiK}\circ\phiK\) of \(\ECK\)
and \(\phiK\circ\conj{\phiK}\) of \(\conj{\ECK}\),
each of degree~\(d^2\).
But (by definition) \(\ECK\) and \(\conj{\ECK}\)
do not have complex multiplication,
so all of their endomorphisms are integer multiplications;
and since the only integer multiplications of degree \(d^2\)
are \([d]\) and \([-d]\), we can conclude that
\[
    \conj{\phiK}\circ\phiK = [\epsilon_p d]_{\ECK}
    \quad
    \text{ and }
    \quad
    \phiK\circ\conj{\phiK} = [\epsilon_p d]_{\conj{\ECK}},
    \quad 
    \text{ where }
    \epsilon_p \in \{\pm1\}
    .
\]
Technically, \(\conj{\phiK}\) and \(\conj[(p)]{\phi}\)
are---\emph{up to sign}---the dual isogenies of \(\phiK\) and \(\phi\),
respectively.
The sign \(\epsilon_p\) depends on \(p\) (as well as on \(\phiK\)):
if \(\tau\) is the extension of \(\sigma\) to
\(\QQ(\sqrt{\Delta},\sqrt{-d})\) that is \emph{not} the image of \((p)\),
then \(\conj[\tau]{\phiK}\circ\phiK = [-\epsilon_p d]_{\ECK}\).
Reducing mod \(p\),
we see that
\begin{equation}
    \label{eq:pphiphi}
    \conj[(p)]{\phi}\circ\phi 
    =
    [\epsilon_p d]_{\EC}
    \quad 
    \text{ and }
    \quad 
    \phi\circ\conj[(p)]{\phi} 
    =
    [\epsilon_p d]_{\conj[(p)]\EC}
    .
\end{equation}

The map
\((x,y)\mapsto(x^p,y^p)\)
defines \(p\)-isogenies
\[
    \pi_0: \conj[(p)]{\EC} \longrightarrow \EC 
    \quad \text{ and }\quad 
    \conj[(p)]{\pi_0}: \EC \longrightarrow \conj[(p)]{\EC}
    .
\]
Clearly,
\(\conj[(p)]{\pi_0}\circ\pi_0\)
(resp.~\(\pi_0\circ\conj[(p)]{\pi_0}\))
is the \(p^2\)-power Frobenius endomorphism of~\(\EC\)
(resp.~\(\conj[(p)]{\EC}\)).
Composing \(\pi_0\) with \(\phi\) yields a degree-\(pd\) endomorphism
\[
    \psi := \pi_0\circ\phi \in \End(\EC) .
\]
If \(d\) is very small---say, less than \(10\)---then 
\(\psi\) is efficient 
because \(\phi\) is defined by polynomials of degree about \(d\),
and \(\pi_0\) acts as a simple conjugation on coordinates in
\(\FF_{p^2}\), as in Eq.~\eqref{eq:FFp2-conj}.
(The efficiency of \(\psi\) depends primarily on its separable
degree, \(d\), and not on the inseparable part \(p\).)

We also obtain an endomorphism \(\psi'\)
on the quadratic twist \(\EC'\) of \(\EC\).  
Indeed, if \(\EC' = \twist{\sqrt{\mu}}{\EC}\),
then \(\psi' = \twist{\sqrt{\mu}}{\psi}\),
and \(\psi'\) is defined over \(\FF_{p^2}\).

\begin{proposition}
    \label{prop:eigenvalue}
    With the notation above:
    \begin{align*}
        \psi^2 = [\epsilon_p d]\pi_{\EC}
        \qquad
        & \text{ and }
        \qquad
        (\psi')^2 = [-\epsilon_p d]\pi_{\EC'}
        .
        \intertext{
            There exists an integer \(r\) satisfying
            \( dr^2 = 2p + \epsilon_p \trace{\EC} \)
            such that
        }
        \psi
        =
        \tfrac{1}{r}\left(\pi_{\EC} + \epsilon_p p\right)
        \qquad
        & \text{ and }
        \qquad
        \psi'
        =
        \tfrac{-1}{r}\left(\pi_{\EC'} - \epsilon_p p\right)
        .
    \end{align*}
    The characteristic polynomial of both \(\psi\) and \(\psi'\)
    is
    \[
        P_\psi(T)
        =
        P_{\psi'}(T)
        =
        T^2 - \epsilon_p rdT + dp 
        .
    \]
\end{proposition}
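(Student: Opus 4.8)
The plan is to establish the quadratic relation $\psi^2 = [\epsilon_p d]\pi_{\EC}$ first, then extract the linear expression for $\psi$ from its characteristic polynomial, and finally transport both statements to the twist $\EC'$ by conjugating with $\twistiso{\sqrt{\mu}}$. For the first step I would use the compatibility of the $p$-power maps with conjugate isogenies: writing $\phi$ in coordinates and applying $f(x,y)^p = \conj[(p)]{f}(x^p,y^p)$ in characteristic $p$ yields the functoriality identity $\pi_0\circ\phi = \conj[(p)]{\phi}\circ\conj[(p)]{\pi_0}$. Substituting this for the second copy of $\psi$ in $\psi^2 = (\pi_0\circ\phi)\circ(\pi_0\circ\phi)$ gives $\psi^2 = \pi_0\circ(\phi\circ\conj[(p)]{\phi})\circ\conj[(p)]{\pi_0}$; Eq.~\eqref{eq:pphiphi} collapses the middle factor to the central map $[\epsilon_p d]_{\conj[(p)]{\EC}}$, which slides out, and $\pi_0\circ\conj[(p)]{\pi_0} = \pi_{\EC}$ is the $p^2$-power Frobenius, leaving $\psi^2 = [\epsilon_p d]\pi_{\EC}$.

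Next I would read off the linear form of $\psi$. Because $\psi = \pi_0\circ\phi$ has degree $pd$, its characteristic polynomial gives $\psi^2 - s\psi + pd = 0$ with $s := \trace{\psi}\in\ZZ$; combining this with $\psi^2 = [\epsilon_p d]\pi_{\EC}$ eliminates the square and leaves the identity $s\psi = \epsilon_p d\,(\pi_{\EC} + \epsilon_p p)$ in $\End(\EC)$. To determine $s$ I would apply the degree form (multiplicative, with $\deg([n]) = n^2$): the left side has degree $s^2 pd$, while $\deg(\pi_{\EC}+\epsilon_p p) = (\pi_{\EC}+\epsilon_p p)(\dualof{\pi_{\EC}}+\epsilon_p p) = 2p^2 + \epsilon_p p\trace{\EC} = p(2p + \epsilon_p\trace{\EC})$ makes the right side $d^2 p(2p+\epsilon_p\trace{\EC})$; equating gives $s^2 = d\,(2p + \epsilon_p\trace{\EC})$. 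Since $d$ is prime and $d\mid s^2$, we get $d\mid s$, so $r := \epsilon_p s/d$ is an integer; dividing $s\psi = \epsilon_p d(\pi_{\EC}+\epsilon_p p)$ by $s = \epsilon_p dr$ then gives $\psi = \tfrac{1}{r}(\pi_{\EC} + \epsilon_p p)$, and substituting $s = \epsilon_p dr$ back into $s^2 = d(2p+\epsilon_p\trace{\EC})$ yields $dr^2 = 2p + \epsilon_p\trace{\EC}$. Here $r\neq 0$ precisely because $\EC$ is ordinary, the case of interest; the boundary $2p+\epsilon_p\trace{\EC}=0$ is the degenerate supersingular one.

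For the twist I would conjugate rather than repeat the argument. A direct computation with $\twistiso{\sqrt{\mu}}\colon (x,y)\mapsto(\mu x,\mu^{3/2}y)$, using $\mu\in\FF_{p^2}$ and $(\sqrt{\mu})^{p^2} = -\sqrt{\mu}$ since $\sqrt{\mu}\in\FF_{p^4}\setminus\FF_{p^2}$, shows $\twist{\sqrt{\mu}}{\pi_{\EC}} = -\pi_{\EC'}$. As integer multiplications are fixed under conjugation by an isomorphism, conjugating $\psi = \tfrac1r(\pi_{\EC}+\epsilon_p p)$ by $\twistiso{\sqrt{\mu}}$ gives at once $\psi' = \tfrac1r(-\pi_{\EC'}+\epsilon_p p) = \tfrac{-1}{r}(\pi_{\EC'}-\epsilon_p p)$, while conjugating $\psi^2 = [\epsilon_p d]\pi_{\EC}$ gives $(\psi')^2 = [-\epsilon_p d]\pi_{\EC'}$. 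For the characteristic polynomial, $\psi = \tfrac1r(\pi_{\EC}+\epsilon_p p)$ gives $\trace{\psi} = \tfrac1r(\trace{\EC}+2\epsilon_p p) = \epsilon_p rd$ (using $dr^2 = 2p+\epsilon_p\trace{\EC}$) and $\deg\psi = pd$, so $P_\psi(T) = T^2 - \epsilon_p rd\,T + dp$; and since $\psi'$ is conjugate to $\psi$ by an isomorphism, it shares the same trace and degree, so $P_{\psi'} = P_\psi$. I expect the main obstacle to be purely bookkeeping: getting the directions and conjugates right in the functoriality identity $\pi_0\circ\phi = \conj[(p)]{\phi}\circ\conj[(p)]{\pi_0}$, and correctly tracking the sign $(\sqrt{\mu})^{p^2} = -\sqrt{\mu}$ that is responsible for the crucial minus sign in $\twist{\sqrt{\mu}}{\pi_{\EC}} = -\pi_{\EC'}$.
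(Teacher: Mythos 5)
Your proof is correct, and its outer skeleton---the functoriality identity \(\pi_0\circ\phi = \conj[(p)]{\phi}\circ\conj[(p)]{\pi_0}\) combined with Eq.~\eqref{eq:pphiphi} to get \(\psi^2 = [\epsilon_p d]\pi_{\EC}\), then the linear expression, then transport to the twist via \(\twistiso{\sqrt{\mu}}\) with the sign coming from \(\mu^{(p^2-1)/2}=-1\)---matches the paper's. The genuine difference is the middle step. The paper verifies, using the characteristic polynomial of Frobenius, that \(\pm\tfrac{1}{r}(\pi_{\EC}+\epsilon_p p)\) are the two square roots of \(\epsilon_p d\,\pi_{\EC}\) in \(\QQ(\pi_{\EC})\) and concludes because \(\psi\) is such a square root; the existence and integrality of \(r\), and the fact that \(\psi\) lies in \(\QQ(\pi_{\EC})\) at all, are left implicit. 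You instead use the characteristic polynomial of \(\psi\) itself to get \(s\psi = \epsilon_p d(\pi_{\EC}+\epsilon_p p)\) with \(s=\trace{\psi}\in\ZZ\), pin down \(s^2 = d(2p+\epsilon_p\trace{\EC})\) by taking degrees, and invoke the primality of \(d\) (the standing hypothesis of \S\ref{sec:construction}) to conclude \(d\mid s\) and hence that \(r=\epsilon_p s/d\) is an integer. This buys you what the paper glosses over: integrality of \(r\) is proved rather than assumed, the relation \(s\psi=\epsilon_p d(\pi_{\EC}+\epsilon_p p)\) with \(s\neq 0\) yields \(\psi\in\QQ(\pi_{\EC})\) instead of presupposing it, and the degenerate case \(r=0\) is explicitly identified with the supersingular reduction \(\trace{\EC}=-2\epsilon_p p\), which is an unstated hypothesis of the proposition in both treatments. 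Your handling of the twist (conjugating the established identities) is the same computation as the paper's, merely organized as transport rather than as a second square-root verification; the paper's route is shorter, yours more self-contained.
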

\begin{proof}
    Clearly, \( \pi_0\circ\phi = \conj[(p)]{\phi}\circ\conj[(p)]{\pi_0} \).
    Hence 
    \[
        \psi^2 
        = 
        \pi_0\circ\phi\circ\pi_0\circ\phi 
        =
        \pi_0\circ\phi\circ\conj[(p)]{\phi}\circ\conj[(p)]{\pi_0}
        =
        \pi_0[\epsilon_p d]\conj[(p)]{\pi_0}
        =
        [\epsilon_p d]\pi_0\conj[(p)]{\pi_0}
        =
        [\epsilon_p d]\pi_{\EC}
        .
    \]
    Choosing a nonsquare \(\mu\) in \(\FF_{p^2}\), so \(\EC' =
    \twist{\sqrt{\mu}}{\EC}\) and \(\psi' = \twist{\sqrt{\mu}}{\psi}\), 
    we find
    \begin{align*}
        (\psi')^2 
        = 
        \twistiso{\mu^{1/2}}\circ\psi^2\circ\twistiso{\mu^{-1/2}}
        & =
        \twistiso{\mu^{1/2}}\circ[\epsilon_p d]\pi_{\EC}\circ\twistiso{\mu^{-1/2}}
        \\
        & =
        \twistiso{\mu^{(1-p^2)/2}}[\epsilon_p d]\pi_{\EC'}
        =
        \twistiso{-1}[\epsilon_p d]\pi_{\EC'}
        =
        [-\epsilon_p d]\pi_{\EC'}
    \end{align*}
    Using the relations 
    \(\pi_{\EC}^2 - \trace{\EC}\pi_{\EC} + p^2 = 0\)
    and
    \(\pi_{\EC'}^2 + \trace{\EC}\pi_{\EC'} + p^2 = 0\),
    we verify that
    the expressions for \(\psi\) and \(\psi'\)
    give the two square roots
    of \(\epsilon_p d\pi_{\EC}\) in \(\QQ(\pi_{\EC})\),
    and \(-\epsilon_p d\pi_{\EC'}\) in \(\QQ(\pi_{\EC}')\),
    and that the claimed characteristic polynomial is satisfied.
    \qed
\end{proof}

Now we just need a source of quadratic \(\QQ\)-curves of small degree.
Elkies~\cite{Elkies}
shows that all \(\QQ\)-curves
correspond to rational points on certain modular curves:
Let \(X^*(d)\) be the quotient of
the modular curve \(X_0(d)\)
by all of its Atkin--Lehner involutions,
let \(K\) be a quadratic field,
and let \(\sigma\) be the involution of \(K\) over \(\QQ\).
If \(e\) is a point in \(X^*(d)(\QQ)\)
and \(E\) is a preimage of \(e\) in \(X_0(d)(K)\setminus X_0(d)(\QQ)\),
then \(E\) parametrizes (up to \(\QQbar\)-isomorphism) a \(d\)-isogeny
\(\phiK: \ECK \to 
\conj{\ECK}\) over \(K\).

Luckily enough, for very small \(d\), 
the curves \(X_0(d)\) and \(X^*(d)\) have genus zero---so not only do we
get plenty of rational points on \(X^*(d)\), 
we get a whole one-parameter family of
\(\QQ\)-curves of degree~\(d\).
Hasegawa
gives explicit universal curves for \(d = 2\), \(3\), and~\(7\)
in~\cite[Theorem 2.2]{Hasegawa}:
for each squarefree integer \(\Delta \not=1\),
every \(\QQ\)-curve of degree \(d = 2, 3, 7\) over \(\QQ(\sqrt{\Delta})\)
is \(\QQbar\)-isomorphic to a rational specialization of one of these
families.
Hasegawa's curves for \(d = 2\) and \(3\)
(\(\ECK_{2,\Delta,s}\) in~\S\ref{sec:degree-2}
and
\(\ECK_{3,\Delta,s}\) in~\S\ref{sec:degree-3})
suffice not only to illustrate our ideas,
but also to give useful practical examples.

\section{
    Short scalar decompositions
}
\label{sec:decompositions}

Before moving on to concrete constructions,
we will show that the endomorphisms developed in
\S\ref{sec:construction} yield short scalar decompositions.
Proposition~\ref{prop:bitlength} below
gives explicit formul\ae{} for producing decompositions
of at most \(\lceil{log_2p\rceil}\) bits.

Suppose \(\G\) is a cyclic subgroup of \(\EC(\FF_{p^2})\) 
such that \(\psi(\G) = \G\),
and let \(N = \#\G\).
Proposition~\ref{prop:eigenvalue}
shows that
\(\psi\) acts as a square root of \(\epsilon_p d\) on \(\G\):
its eigenvalue is
\begin{equation}
    \label{eq:eigenvalue}
    \lambda_\psi \equiv (1 + \epsilon_p p)/r \pmod{N} .
\end{equation}
We want to 
compute a decomposition
\[
    m = a + b\lambda_\psi \pmod N
\]
so as to efficiently compute 
\[
    [m]P = [a]P + [b\lambda_\psi]P = [a]P + [b]\psi(P) .
\]

The decomposition of \(m\) 
is not unique: far from it.
The set of all decompositions \((a,b)\) of \(m\)
is the coset \((m,0) + \Lattice\),
where
\[
    \Lattice
    := 
    \subgrp{
        (N,0),(-\lambda_\psi,1)
    }
    \subset \ZZ^2
\]
is the lattice of decompositions of \(0\)
(that is, of \((a,b)\) such that \(a + b\lambda_\psi \equiv 0\pmod{N}\)).

We want to find a decomposition where \(a\) and \(b\)
have minimal bitlength:
that is, where \(\lceil\log_2\|(a,b)\|_\infty\rceil\) is as small as
possible.
The standard technique is to (pre)-compute a short basis of \(\Lattice\),
then use Babai rounding~\cite{Babai}
to transform each scalar~\(m\)
into a short decomposition \((a,b)\).
The following lemma outlines this process;
for further detail and analysis,
see 
\cite[\S4]{GLV} and~\cite[\S18.2]{Galbraith}.

\begin{lemma}
    \label{lemma:decomp}
    Let 
    \(\vec{e}_1,\vec{e}_2\)
    be linearly independent vectors in \(\Lattice\).
    Let \(m\) be an integer,
    and set
    \[
        (a,b) 
        := 
        (m,0) - \roundoff{\alpha}\vec{e}_1 - \roundoff{\beta}\vec{e}_2 
        ,
    \]
    where
    \((\alpha,\beta)\) is the (unique) solution in \(\QQ^2\)
    to the linear system
    \((m,0) = \alpha\vec{e}_1 + \beta\vec{e}_2\).
    Then 
    \[
        m \equiv a + \lambda_\psi b \pmod{N}
        \qquad\text{and}\qquad
        \|(a,b)\|_\infty
        \le
        \max\left(\|\vec{e}_1\|_\infty,\|\vec{e}_2\|_\infty\right)
        .
    \]
\end{lemma}
\begin{proof}
    This is just~\cite[Lemma~2]{GLV} (under the infinity norm). 
    \qed
\end{proof}
 
We see that better decompositions of \(m\)
correspond to shorter bases for~\(\Lattice\).
If \(|\lambda_\psi|\) is not unusually small,
then we can compute a basis for \(\Lattice\)
of size \(\Oh(\sqrt{N})\)
using the Gauss reduction or Euclidean algorithms
(cf.~\cite[\S4]{GLV} and~\cite[\S17.1.1]{Galbraith}).\footnote{
    Bounds on the constant hidden by the \(\Oh(\sqrt{N})\)
    are derived in~\cite{SCQ},
    but they are suboptimal for our endomorphisms.
    In cryptographic contexts, Proposition~\ref{prop:bitlength}
    gives better results.
}
The basis depends only on \(N\) and \(\lambda_\psi\),
so it can be precomputed.

In our case,
lattice reduction is unnecessary:
we can immediately write down two linearly independent vectors in \(\Lattice\)
that are ``short enough'',
and thus give explicit formulae for \((a,b)\) in terms of \(m\).
These decompositions have length \(\lceil{\log_2p}\rceil\),
which is near-optimal in cryptographic contexts:
if \(N\sim\#\EC(\FF_{p^2}) \sim p^2\),
then \(\log_2 p \sim \frac{1}{2}\log_2N\).

\begin{proposition}
    \label{prop:bitlength}
    With the notation above:
    given an integer \(m\),
    let
    \begin{align*}
        a 
        & = 
        m - \bigroundoff{m(1 + \epsilon_p p)/\#\EC(\FF_{p^2})}(1 + \epsilon_p p)
            + \bigroundoff{mr/\#\EC(\FF_{p^2})}\epsilon_pdr
        \qquad\text{and}\\
        b
        & =
        \bigroundoff{m(1 + \epsilon_p p)/\#\EC(\FF_{p^2})}r
            - \bigroundoff{mr/\#\EC(\FF_{p^2})}(1 + \epsilon_p p)
        .
    \end{align*}
    Then, assuming \(d \ll p\) and \(m \not\equiv0\pmod{N}\), we have
    \[
        m \equiv a + b\lambda_\psi \pmod{N}
        \qquad
        \text{ and }
        \qquad
        \lceil\log_2\|(a,b)\|_\infty\rceil
        \le 
        \lceil\log_2p\rceil
        .
    \]
\end{proposition}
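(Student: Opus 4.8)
The plan is to apply Lemma~\ref{lemma:decomp} with an explicit, carefully chosen pair of lattice vectors, so that the stated formul\ae{} for \(a\) and \(b\) fall out directly from Babai rounding, and then to bound the resulting decomposition. First I would exhibit two elements of \(\Lattice\). The natural candidates are \(\vec{e}_1 = (1+\epsilon_p p, -r)\) and \(\vec{e}_2 = (-\epsilon_p d r, 1+\epsilon_p p)\): using \(\lambda_\psi \equiv (1+\epsilon_p p)/r \pmod N\) from Eq.~\eqref{eq:eigenvalue} together with the characteristic polynomial \(P_\psi(T) = T^2 - \epsilon_p r d T + dp\) from Proposition~\ref{prop:eigenvalue}, I would check that both vectors satisfy \(a + b\lambda_\psi \equiv 0 \pmod N\). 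The first is immediate since \((1+\epsilon_p p) - r\lambda_\psi \equiv 0\); the second follows from the relation \(\lambda_\psi^2 \equiv \epsilon_p d \pmod N\) (equivalently, that \(\lambda_\psi\) is a root of \(P_\psi\) reduced mod \(N\)), which gives \(-\epsilon_p dr + (1+\epsilon_p p)\lambda_\psi \equiv r(\lambda_\psi^2 - \epsilon_p d) \equiv 0\). I would also confirm these two vectors are linearly independent by computing the determinant, which should come out to \((1+\epsilon_p p)^2 + \epsilon_p d r^2\); invoking \(dr^2 = 2p + \epsilon_p\trace{\EC}\) from Proposition~\ref{prop:eigenvalue} and \(\#\EC(\FF_{p^2}) = p^2 + 1 - \trace{\EC_{p^2}}\), I expect this determinant to equal \(\pm\#\EC(\FF_{p^2})\), confirming both that the vectors span a finite-index sublattice and that the Cramer's-rule denominators in the rounding step are exactly \(\#\EC(\FF_{p^2})\), matching the formula.

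Next I would feed \(\vec{e}_1, \vec{e}_2\) into Lemma~\ref{lemma:decomp}. Solving \((m,0) = \alpha\vec{e}_1 + \beta\vec{e}_2\) by Cramer's rule gives \(\alpha\) and \(\beta\) as ratios with denominator equal to the determinant, i.e.\ \(\alpha = m(1+\epsilon_p p)/\#\EC(\FF_{p^2})\) and \(\beta\) proportional to \(mr/\#\EC(\FF_{p^2})\), up to signs that I would track carefully against the \(\epsilon_p\) conventions. Substituting \((a,b) = (m,0) - \roundoff{\alpha}\vec{e}_1 - \roundoff{\beta}\vec{e}_2\) and expanding componentwise should reproduce exactly the displayed expressions for \(a\) and \(b\); the congruence \(m \equiv a + b\lambda_\psi \pmod N\) is then automatic from the lemma, since subtracting integer multiples of lattice vectors preserves the class of \((m,0)\) modulo \(\Lattice\). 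This part is essentially bookkeeping, but the sign conventions around \(\epsilon_p\) and the placement of the determinant's sign are where errors creep in, so I would verify them by back-substitution.

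The genuinely substantive step is the bitlength bound. Lemma~\ref{lemma:decomp} only gives \(\|(a,b)\|_\infty \le \max(\|\vec{e}_1\|_\infty, \|\vec{e}_2\|_\infty)\), and \(\|\vec{e}_2\|_\infty = 1 + \epsilon_p p\) is (for \(\epsilon_p = +1\)) of size \(p+1\), which already exceeds the target \(2^{\lceil\log_2 p\rceil}\) by a hair. So the crude lemma bound is not quite enough, and I cannot simply quote it. Instead I would need a sharper estimate tailored to these particular vectors: since the two basis vectors are nearly orthogonal of length \(\sim\sqrt{p}\cdot\sqrt{\smash[b]{\text{const}}}\) when measured correctly — note \(\|\vec{e}_1\|_2^2 = (1+\epsilon_p p)^2 + r^2\) with \(r \sim \sqrt{2p/d}\), so the \(1+\epsilon_p p\) term dominates and each vector has length \(\approx p\) — I anticipate the real content is that Babai rounding against this basis lands \((a,b)\) strictly inside the half-open fundamental domain, giving each coordinate bounded by roughly \(\tfrac12(\|\vec{e}_1\|_\infty + \|\vec{e}_2\|_\infty)\) rather than the maximum. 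I expect this refined analysis, combined with the hypothesis \(d \ll p\) (which forces \(r \ll p\) and keeps the off-diagonal entries \(\epsilon_p d r\) and \(r\) small relative to \(1+\epsilon_p p\)), to yield \(\|(a,b)\|_\infty \le p\), hence \(\lceil\log_2\|(a,b)\|_\infty\rceil \le \lceil\log_2 p\rceil\). The hypothesis \(m \not\equiv 0 \pmod N\) is presumably needed to rule out the degenerate \((a,b) = (0,0)\), where \(\log_2\) is undefined. The hard part will be making this near-orthogonality argument rigorous enough to squeeze the bound down from \(\sim p\) to the exact \(\lceil\log_2 p\rceil\), controlling the rounding errors so that the unavoidable factor from \(\|\vec{e}_2\|_\infty = p+1\) does not tip the bitlength over by one.
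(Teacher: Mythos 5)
Your first two paragraphs reproduce the paper's proof essentially verbatim: the same vectors \(\vec{e}_1 = (1+\epsilon_p p,\, -r)\) and \(\vec{e}_2 = (-\epsilon_p dr,\, 1+\epsilon_p p)\), the same membership check via Eq.~\eqref{eq:eigenvalue}, and the same application of Lemma~\ref{lemma:decomp} with \(\alpha = m(1+\epsilon_p p)/\#\EC(\FF_{p^2})\) and \(\beta = mr/\#\EC(\FF_{p^2})\). (One slip to fix there: the determinant is \((1+\epsilon_p p)^2 - \epsilon_p dr^2\), not \(+\); with \(dr^2 = 2p + \epsilon_p\trace{\EC}\) it collapses to \(p^2 + 1 - \trace{\EC} = \#\EC(\FF_{p^2})\) exactly, not merely up to sign.)

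The genuine gap is in your final paragraph, and it rests on a false premise. You assert that \(\|\vec{e}_2\|_\infty = p+1\) (when \(\epsilon_p = +1\)) ``already exceeds the target \(2^{\lceil\log_2 p\rceil}\) by a hair''. It does not: \(p>3\) is an odd prime, hence not a power of two, so writing \(k = \lceil\log_2 p\rceil\) we have \(2^{k-1} < p < 2^k\) and therefore \(p+1 \le 2^k\). Consequently the crude bound from Lemma~\ref{lemma:decomp}, namely \(\|(a,b)\|_\infty \le \max\left(\|\vec{e}_1\|_\infty, \|\vec{e}_2\|_\infty\right) = p + \epsilon_p\) (the equality uses \(d|r| \le 2\sqrt{dp} \ll p\), which is exactly where the hypothesis \(d \ll p\) enters), already gives \(\lceil\log_2\|(a,b)\|_\infty\rceil \le \lceil\log_2(p+\epsilon_p)\rceil \le k\); this one-line ceiling observation is precisely how the paper closes the proof. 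Your proposed substitute---a sharper ``half fundamental domain'' analysis forcing \(\|(a,b)\|_\infty \le p\)---is both unnecessary and unavailing: since \((a,b) = (\alpha - \roundoff{\alpha})\vec{e}_1 + (\beta - \roundoff{\beta})\vec{e}_2\) with both rounding errors at most \(\tfrac12\), the refined estimate it yields is \(\tfrac12\left(\|\vec{e}_1\|_\infty + \|\vec{e}_2\|_\infty\right)\), which equals \(p+\epsilon_p\) again because both vectors have the same infinity norm (indeed, this half-sum estimate is essentially the proof of \cite[Lemma~2]{GLV}, i.e.\ of Lemma~\ref{lemma:decomp} itself). So as written your argument is incomplete: the entire bitlength bound is deferred to a ``hard part'' that cannot deliver a better constant, while the actual closing step is an elementary parity observation about \(p\) that you overlooked.
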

\begin{proof}
    Eq.~\eqref{eq:eigenvalue} 
    yields
    \(r\lambda_\psi \equiv 1 + \epsilon_p p \pmod{N}\)
    and \(r\epsilon_pd \equiv (1 + \epsilon_p p)\lambda_\psi \pmod{N}\),
    so 
    the vectors
    \(\vec{e}_1 = (1 + \epsilon_p p, -r)\) 
    and \(\vec{e}_2 = (-\epsilon_p dr, 1 + \epsilon_p p)\)
    are in \(\Lattice\)
    (they generate a sublattice of determinant \(\#\EC(\FF_{p^2})\)).
    Applying Lemma~\ref{lemma:decomp}
    with \(\alpha = m(1+\epsilon_p p)/\#\EC(\FF_{p^2})\)
    and \(\beta = mr/\#\EC(\FF_{p^2})\),
    we see that 
    \(m \equiv a + b\lambda_\psi\pmod{N}\)
    and \(\|(a,b)\|_\infty \le \|\vec{e}_2\|_\infty\).
    But \(d|r|\le 2\sqrt{dp}\)
    (since \(|\trace{\EC}| \le 2p\))
    and \(d \ll p\), 
    so \(\|\vec{e}_2\|_\infty = p + \epsilon_p\).
    The result follows on taking logs,
    and noting that 
    \(\lceil{\log_2(p\pm1)}\rceil = \lceil{\log_2 p}\rceil\)
    (since \(p > 3\)).
    \qed
\end{proof}

\section{
    Endomorphisms from quadratic \(\QQ\)-curves of degree 2
}
\label{sec:degree-2}

Let \(\Delta\) be a squarefree integer.
Hasegawa
defines a one-parameter family of elliptic curves
over \(\QQ(\sqrt{\Delta})\) by
\begin{equation}\label{eq:Hasegawa-2}
    \ECK_{2,\Delta,s}: 
    y^2 
    = 
    x^3 - 6(5 - 3s\sqrt{\Delta})x + 8(7 - 9s\sqrt{\Delta})
    ,
\end{equation}
where \(s\) is a free parameter taking values in \(\QQ\)
\cite[Theorem 2.2]{Hasegawa}.
The discriminant of \(\ECK_{2,\Delta,s}\) is
\(2^9\cdot3^6(1 - s^2\Delta)(1 + s\sqrt{\Delta})\),
so the curve \(\ECK_{2,\Delta,s}\) 
has good reduction at every \(p > 3\) with \(\Legendre{\Delta}{p} = -1\),
for every \(s\) in \(\QQ\).

The curve 
\(\ECK_{2,\Delta,s}\)
has a rational \(2\)-torsion point \((4,0)\),
which generates the kernel of a \(2\)-isogeny 
\(\phiK_{2,\Delta,s}: \ECK_{2,\Delta,s} \to \conj{\ECK_{2,\Delta,s}}\)
defined over \(\QQ(\sqrt{\Delta},\sqrt{-2})\).
We construct \(\phiK_{2,\Delta,s}\) explicitly:
V\'elu's formulae~\cite{Velu} 
define
the (normalized) quotient 
\(\ECK_{2,\Delta,s} \to \ECK_{2,\Delta,s}/\subgrp{(4,0)}\),
and then
the isomorphism \(\ECK_{2,\Delta,s}/\subgrp{(4,0)} \to \conj{\ECK_{2,\Delta,s}}\)
is the quadratic twist \(\twistiso{1/\sqrt{-2}}\).
Composing, we obtain an expression for the isogeny as a rational map:
\[
    \label{eq:phiK2}
    \phiK_{2,\Delta,t}:
    (x,y)
    \longmapsto
    \left(
        \frac{-x}{2} - \frac{9(1 + s\sqrt{\Delta})}{x-4}
        , 
        \frac{y}{\sqrt{-2}}
        \left(
            \frac{-1}{2} + \frac{9(1 + s\sqrt{\Delta})}{(x-4)^2}
        \right)
    \right)
    .
\]
Conjugating and composing,
we recognise that
\( 
    \conj[\sigma]{\phiK_{2,\Delta,t}}\circ\phiK_{2,\Delta,t} 
    = 
    [2]
\) if \(\sigma(\sqrt{-2}) = -\sqrt{-2}\),
and \([-2]\) if \(\sigma(\sqrt{-2}) = \sqrt{-2}\):
that is, the sign function for \(\phiK_{2,\Delta,t}\) is
\begin{equation}
    \label{eq:eps-2}
    \epsilon_p 
    = 
    -\Legendre{-2}{p} 
    =
    \begin{cases}
        1  & \text{if }\ p \equiv 5, 7 \pmod{8} , \\
        -1 & \text{if }\ p \equiv 1, 3 \pmod{8} . \\
    \end{cases}
\end{equation}


\begin{theorem}
    \label{th:d2}
    Let \(p > 3\) be a prime,
    and define \(\epsilon_p\) as in Eq.~\eqref{eq:eps-2}.
    Let \(\Delta\) be a nonsquare\footnote{
        The choice of \(\Delta\) is (theoretically) irrelevant,
        since all quadratic extensions of
        \(\FF_{p}\) are isomorphic.
        If \(\Delta\) and \(\Delta'\) are two nonsquares in \(\FF_{p}\),
        then \(\Delta/\Delta' = a^2\) for some \(a\) in \(\FF_{p}\),
        so \(\EC_{2,\Delta,t}\) and \(\EC_{2,\Delta',at}\)
        are identical.
        We are therefore free to choose any practically convenient
        value for~\(\Delta\),
        such as one permitting faster arithmetic
        in~\(\FF_{p}(\sqrt{\Delta})\).
    }
    in \(\FF_p\),
    so \(\FF_{p^2} = \FF_p(\sqrt{\Delta})\).
    Let \(C_{2,\Delta}: \FF_p \to \FF_{p^2}\) be the mapping defined by
    \[
        C_{2,\Delta}(s) := 9(1 + s\sqrt{\Delta}) .
    \]
    For each \(s\) in \(\FF_p\),
    let \(\EC_{2,\Delta,s}\) 
    be the elliptic curve over \(\FF_{p^2}\) defined by
    \[
        \EC_{2,\Delta,s}
        : 
        y^2 = x^3 + 2(C_{2,\Delta}(s) - 24)x - 8(C_{2,\Delta}(s) - 16)
        .
    \]
    Then \(\EC_{2,\Delta,s}\) 
    has an efficient \(\FF_{p^2}\)-endomorphism
    \[
        \psi_{2,\Delta,s}: 
        (x,y) 
        \longmapsto
        \left(
            \frac{-x^p}{2} - \frac{C_{2,\Delta}(s)^p}{x^p-4}
            ,
            \frac{y^p}{\sqrt{-2}}
            \left(\frac{-1}{2} + \frac{C_{2,\Delta}(s)^p}{(x^p-4)^2}\right)
        \right)
        ,
    \]
    of degree \(2p\),
    such that
    \[
        \psi_{2,\Delta,s}
        =
        \frac{1}{r}\left(\pi_{\EC_{2,\Delta,s}} + \epsilon_p p\right)
        \qquad \text{ and } \qquad 
        \psi_{2,\Delta,s}^2 = [\epsilon_p 2]\pi_{\EC_{2,\Delta,s}} 
    \]
    for some integer \(r\) satisfying
    \( 2r^2 = 2p + \epsilon_p \trace{\EC_{2,\Delta,s}} \).
    The characteristic polynomial of \(\psi_{2,\Delta,s}\)
    is
    \(
        P_{2,\Delta,s}(T)
        =
        T^2 - \epsilon_prT + 2p 
    \).
    The twisted endomorphism \(\psi_{2,\Delta,s}'\)
    on \(\EC_{2,\Delta,s}'\)
    satisfies
    \(
        \psi_{2,\Delta,s}' 
        = 
        (-\pi_{\EC_{2,\Delta,s}'} + \epsilon_p p)/r
    \),
    and
    \((\psi_{2,\Delta,s}')^2 = [-\epsilon_p 2]\pi_{\EC_{2,\Delta,s}'}\),
    and \(P_{2,\Delta,s}(\psi_{2,\Delta,s}') = 0\).
\end{theorem}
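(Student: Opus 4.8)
The plan is to read Theorem~\ref{th:d2} as the explicit \(d=2\) specialization of the general construction of \S\ref{sec:construction}, applied to Hasegawa's family~\eqref{eq:Hasegawa-2}. Once I identify the reduced curve and the reduced isogeny concretely, every arithmetic assertion---the squaring relation, the expression for \(\psi_{2,\Delta,s}\) in terms of \(\pi_{\EC_{2,\Delta,s}}\), the integer \(r\), the characteristic polynomial, and all the twisted statements---will drop out of Proposition~\ref{prop:eigenvalue} with \(d=2\) and the sign \(\epsilon_p\) fixed by~\eqref{eq:eps-2}.

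First I would verify that \(\EC_{2,\Delta,s}\) is literally the reduction of \(\ECK_{2,\Delta,s}\) modulo \(p\). Expanding \(2(C_{2,\Delta}(s)-24)\) and \(-8(C_{2,\Delta}(s)-16)\) with \(C_{2,\Delta}(s)=9(1+s\sqrt{\Delta})\) recovers the coefficients \(-6(5-3s\sqrt{\Delta})\) and \(8(7-9s\sqrt{\Delta})\) of~\eqref{eq:Hasegawa-2}, so the two Weierstrass equations coincide. Since \(\Delta\) is a nonsquare in \(\FF_p\), the prime \(p\) is inert in \(\QQ(\sqrt{\Delta})\), and for \(p>3\) (with \(s\) giving nonzero discriminant) the reduction is a genuine elliptic curve over \(\FF_{p^2}=\FF_p(\sqrt{\Delta})\), exactly as required in \S\ref{sec:construction}. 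The reduced \(2\)-isogeny \(\phi:\EC_{2,\Delta,s}\to\conj[(p)]{\EC_{2,\Delta,s}}\) is then obtained by reading Hasegawa's isogeny \(\phiK_{2,\Delta,s}\) with \(9(1+s\sqrt{\Delta})\) replaced by \(C_{2,\Delta}(s)\).

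The one genuinely computational step is to confirm that the displayed \(\psi_{2,\Delta,s}\) is \(\pi_0\circ\phi\), where \(\pi_0\) acts as \((x,y)\mapsto(x^p,y^p)\). Applying the \(p\)-power map to the coordinate functions of \(\phi\), and using that \(2\) and \(4\) are fixed by Frobenius while \(C\mapsto C^p\), the first coordinate \(-x/2-C/(x-4)\) becomes \(-x^p/2-C^p/(x^p-4)\), matching the stated formula. The delicate point is the \(y\)-coordinate: Frobenius sends \(1/\sqrt{-2}\) to \(\Legendre{-2}{p}/\sqrt{-2}\) by Euler's criterion, so \(\pi_0\circ\phi\) carries an extra factor \(\Legendre{-2}{p}=-\epsilon_p\) relative to the displayed formula. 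I expect this sign to be the main obstacle; it is resolved by observing that it merely exchanges \(\psi_{2,\Delta,s}\) with \(-\psi_{2,\Delta,s}\), a swap absorbed by the freedom \(r\mapsto-r\) left open by \(2r^2=2p+\epsilon_p\trace{\EC_{2,\Delta,s}}\). Concretely, pinning the sign down amounts to choosing the extension of \(\sigma\) to \(\QQ(\sqrt{\Delta},\sqrt{-2})\) that is the image of \((p)\), exactly the choice recorded in~\eqref{eq:eps-2}.

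With these identifications in hand, the remainder is citation. Because \(\conj[\sigma]{\phiK_{2,\Delta,s}}\circ\phiK_{2,\Delta,s}=[\epsilon_p2]\) with \(\epsilon_p\) as in~\eqref{eq:eps-2}, Proposition~\ref{prop:eigenvalue} applies with \(d=2\) and yields \(\psi_{2,\Delta,s}^2=[\epsilon_p2]\pi_{\EC_{2,\Delta,s}}\), the identity \(\psi_{2,\Delta,s}=\frac1r(\pi_{\EC_{2,\Delta,s}}+\epsilon_pp)\), and the characteristic polynomial \(P_{2,\Delta,s}\); the degree \(2p\) and efficiency follow from \(\deg\phi=2\) and \(\deg\pi_0=p\), and \(\psi_{2,\Delta,s}\) is defined over \(\FF_{p^2}\) because both \(\phi\) and \(\pi_0\) are. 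Finally the twisted statements are the \(d=2\) case of the twisted half of Proposition~\ref{prop:eigenvalue}, obtained via \(\psi_{2,\Delta,s}'=\twist{\sqrt{\mu}}{\psi_{2,\Delta,s}}\) and \(P_\psi=P_{\psi'}\).
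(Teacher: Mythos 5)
Your proposal is correct and follows essentially the same route as the paper, whose proof is exactly this: reduce \(\ECK_{2,\Delta,s}\) and \(\phiK_{2,\Delta,s}\) modulo \(p\), compose with \(\pi_0\) as in \S\ref{sec:construction}, and apply Proposition~\ref{prop:eigenvalue} with the sign from Eq.~\eqref{eq:eps-2}. Your added details---matching the Weierstrass coefficients with \(C_{2,\Delta}(s)=9(1+s\sqrt{\Delta})\), and noting that the Frobenius action on \(1/\sqrt{-2}\) introduces a factor \(\Legendre{-2}{p}=-\epsilon_p\) that is harmlessly absorbed by the sign freedom \(r\mapsto -r\)---are accurate refinements of what the paper leaves implicit.
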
 
\begin{proof}
    Reduce \(\ECK_{2,\Delta,s}\) and \(\phiK_{2,\Delta,s}\) mod \(p\)
    and compose with \(\pi_0\) as in~\S\ref{sec:construction},
    then apply Proposition~\ref{prop:eigenvalue} 
    using Eq.~\eqref{eq:eps-2}.
    \qed
\end{proof}

If \(\G \subset \EC_{2,\Delta,s}(\FF_{p^2})\) is a cyclic subgroup
of order \(N\)
such that \(\psi_{2,\Delta,s}(\G) = \G\),
then the eigenvalue of \(\psi_{2,\Delta,s}\) on \(\G\) is 
\[
    \lambda_{2,\Delta,s} =
    \frac{1}{r}\left( 1 + \epsilon_p p \right)
    \equiv 
    \pm\sqrt{\epsilon_p 2} 
    \pmod{N}
    .
\]
Applying Proposition~\ref{prop:bitlength},
we can decompose scalar multiplications in \(\G\)
as \([m]P = [a]P + [b]\psi_{2,\Delta,s}(P)\)
where \(a\) and \(b\) have at most \(\lceil\log_2p\rceil\) bits.

\begin{proposition}
    \label{prop:j-d2}
    Theorem~\ref{th:d2} 
    yields at least \(p-3\) non-isomorphic curves
    (and at least \(2p-6\) non-\(\FF_{p^2}\)-isomorphic curves,
    if we count the quadratic twists)
    equipped with efficient endomorphisms.
\end{proposition}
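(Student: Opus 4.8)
The plan is to count distinct \(j\)-invariants in the family and then to control the rare coincidences by an explicit collision polynomial. Two members \(\EC_{2,\Delta,s_1}\) and \(\EC_{2,\Delta,s_2}\) are \(\FFbar_p\)-isomorphic exactly when they share a \(j\)-invariant, so I would first compute, writing \(C = C_{2,\Delta}(s)\),
\[
    j(\EC_{2,\Delta,s}) = \frac{1728(C-24)^3}{(C-24)^3 + 54(C-16)^2},
\]
a rational function of degree \(3\) in \(C\). Since \(s \mapsto C = 9(1+s\sqrt{\Delta})\) maps \(\FF_p\) injectively onto the line \(L \subset \FF_{p^2}\) of elements whose ``real part'' is \(9\), the whole question reduces to counting distinct points of \(L\) that share a \(j\)-value. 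Two facts from the formula will be used throughout: neither \(j=0\) nor \(j=1728\) is attained, since these force \(C=24\) resp.\ \(C=16\), neither of which lies on \(L\); and by the discriminant \(2^9\cdot 3^6(1 - s^2\Delta)(1 + s\sqrt{\Delta})\), no \(s\in\FF_p\) yields a singular curve, because \(\Delta\) being a nonsquare keeps both factors from vanishing.

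Next I would form the collision locus. Clearing denominators in \(j(C_1)=j(C_2)\), the \((C_1-24)^3(C_2-24)^3\) terms cancel and the diagonal \((C_1-C_2)\) factors out, leaving a symmetric polynomial \(G(C_1,C_2)\) of bidegree \((2,2)\) whose vanishing detects an off-diagonal coincidence. Substituting \(C_i=9(1+s_i\sqrt{\Delta})\) and separating \(G\) into its \(\FF_p\)-part and its \(\sqrt{\Delta}\)-part, I expect the imaginary part to factor as a nonzero scalar times \((s_1+s_2)\) times an affine form in the product \(s_1s_2\). This cleanly splits the coincidences into two cases: the conjugation-symmetric case \(s_2=-s_1\) (equivalently \(j(\EC_{2,\Delta,s})\in\FF_p\), since then \(C_2\) is the \((p)\)-conjugate of \(C_1\)), and one ``fixed-product'' case \(s_1s_2 = \mathrm{const}/\Delta\).

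I would then intersect each case with the real part. In the symmetric case the real part is a quadratic in \(\sigma_2 = C_1C_2\in\FF_p\) with two explicit roots, one of which is infeasible over \(\FF_p\) because it reproduces the condition \(s^2\Delta=1\), impossible for a nonsquare \(\Delta\); so at most one symmetric pair survives. In the fixed-product case the real part pins \((s_1+s_2)^2\Delta\) to an explicit constant, which is a square in \(\FF_p\) only under a congruence condition on \(p\), and then yields at most two pairs. Hence at most three parameters are identified with another, giving at least \(p-3\) distinct \(j\)-invariants, i.e.\ at least \(p-3\) pairwise non-isomorphic curves. For the twisted count, every attained \(j\) avoids \(0\) and \(1728\), so it supports exactly two \(\FF_{p^2}\)-isomorphism classes---a curve and its quadratic twist \(\EC_{2,\Delta,s}'\), which are never \(\FF_{p^2}\)-isomorphic---and so these \(p-3\) curves together with their twists furnish at least \(2p-6\) pairwise non-\(\FF_{p^2}\)-isomorphic curves.

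The main obstacle is pinning the constant to exactly \(3\) rather than some larger \(O(1)\). Everything rests on the two favorable features of the split: that the imaginary part factors into just two cases, and that one root in the symmetric case is infeasible over \(\FF_p\). The delicate, if finite, work is therefore carrying out the real/imaginary separation of \(G\) faithfully and verifying the feasibility conditions case by case; a careless argument---for instance a raw B\'ezout bound on \(G=0\) restricted to \(L\times L\)---would only give \(p-O(1)\) and miss the sharp constant.
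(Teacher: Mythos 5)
Your proposal takes essentially the same route as the paper's proof: it too counts \(j\)-invariant collisions by separating \(j(\EC_{2,\Delta,s_1}) = j(\EC_{2,\Delta,s_2})\) into its \(\FF_p\)-rational part and its \(\sqrt{\Delta}\)-part, and the structure you predict---the imaginary part factoring as \((s_1+s_2)\) times an affine form in \(s_1s_2\), one symmetric root ruled out because \(\Delta\) is a nonsquare, and at most two fixed-product pairs---is exactly what the paper's polynomials \(F_1(s_1,s_2) = (s_1+s_2)(63\Delta s_1s_2 - 65)\) and \(F_0(s_1,s_2)\) deliver, giving the same bound of three collisions and hence \(p-3\) classes. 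The only difference is cosmetic: you work through \(C = C_{2,\Delta}(s)\) and spell out the twist count via \(j \neq 0, 1728\), which the paper leaves implicit.
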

\begin{proof}
    It suffices to show that the \(j\)-invariant
    \(
        j\big(\EC_{2,\Delta,s}\big)
        =
        \frac{
            2^6(5 - 3s\sqrt{\Delta})^3
        }{
            (1 - s^2\Delta)(1 + s\sqrt{\Delta})
        }
    \)
    takes at least \(p-3\) distinct values in \(\FF_{p^2}\)
    as \(s\) ranges over \(\FF_p\).
    If \(j(\EC_{2,\Delta,s}) = j(\EC_{2,\Delta,s_1})\) with \(s_1
    \not=s_2\),
    then \(s_1\) and \(s_2\) satisfy 
    \(F_0(s_1,s_2) - 2\sqrt{\Delta}F_1(s_1,s_2) = 0\),
    where
    \(F_1(s_1,s_2) = (s_1 + s_2)(63\Delta s_1s_2 - 65)\)
    and
    \(F_0(s_1,s_2) = (\Delta s_1s_2 + 1)(81 \Delta s_1s_2 - 175) + 49\Delta(s_1 + s_2)^2\)
    are polynomials over \(\FF_p\).
    If \(s_1\) and \(s_2\) are in \(\FF_p\),
    then we must have \(F_0(s_1,s_2) = F_1(s_1,s_2) = 0\).
    Solving the simultaneous equations, discarding the solutions that can never
    be in \(\FF_p\), and dividing by two (since \((s_1,s_2)\) and
    \((s_2,s_1)\) represent the same collision) yields at most 3
    collisions \(j(\EC_{2,\Delta,s_1}) = j(\EC_{2,\Delta,s_2})\)
    with \(s_1\not=s_2\) in \(\FF_p\).
    \qed
%
\end{proof}

We observe that \(\conj{\ECK_{2,\Delta,s}} = \ECK_{2,\Delta,-s}\),
so we do not gain any more isomorphism classes in
Proposition~\ref{prop:j-d2}
by including the codomain curves.

\section{
    Endomorphisms from quadratic \(\QQ\)-curves of degree 3
}
\label{sec:degree-3}

Let \(\Delta\) be a squarefree discriminant;
Hasegawa defines a one-parameter family of elliptic curves
over \(\QQ(\sqrt{\Delta})\) by
\begin{equation}\label{eq:Hasegawa-3}
    \ECK_{3,\Delta,s}: 
    y^2 =
    x^3 - 3\big(5 + 4s\sqrt{\Delta}\big)x 
    + 2\big(2s^2\Delta + 14s\sqrt{\Delta} + 11\big)
    ,
\end{equation}
where \(s\) is a free parameter taking values in \(\QQ\).
As for the curves in \S\ref{sec:degree-2},
the curve \(\ECK_{3,\Delta,s}\) 
has good reduction at every \(p > 3\) with \(\Legendre{\Delta}{p} = -1\),
for every \(s\) in \(\QQ\).

The curve \(\ECK_{3,\Delta,s}\)
has a subgroup of order~\(3\)
defined by the polynomial \(x - 3\),
consisting of \(0\) and \((3,\pm2(1-s\sqrt{\Delta}))\).
Exactly as in~\S\ref{sec:degree-2},
taking the V\'elu quotient
and twisting by \(1/\sqrt{-3}\)
yields an explicit 3-isogeny 
\(\phiK_{3,\Delta,s}: \ECK_{3,\Delta,s} \to \conj{\ECK_{3,\Delta,s}}\);
its
sign function is
\begin{equation}
    \label{eq:eps-3}
    \epsilon_p = -\Legendre{-3}{p} 
    =
    \begin{cases}
         1 & \text{if }\ p \equiv 2 \pmod{3} , \\
        -1 & \text{if }\ p \equiv 1 \pmod{3} .
    \end{cases}
\end{equation}

\begin{theorem}
    \label{th:d3}
    Let \(p > 3\) be a prime,
    and define \(\epsilon_p\) as in Eq.~\eqref{eq:eps-3}.
    Let \(\Delta\) be a nonsquare\footnote{
        As in Theorem~\ref{th:d2},
        the particular value of \(\Delta\) is theoretically
        irrelevant.
    }
    in \(\FF_{p}\),
    so \(\FF_{p^2} = \FF_p(\sqrt{\Delta})\).
    Let \(C_{3,\Delta}: \FF_p \to \FF_{p^2}\)
    be the mapping defined by
    \[
        C_{3,\Delta}(s) := 2(1 + s\sqrt{\Delta}) .
    \]
    For each \(s\) in \(\FF_p\),
    we let \(\EC_{3,\Delta,s}\) 
    be the elliptic curve over \(\FF_{p^2}\) 
    defined by
    \[
        \EC_{3,\Delta,s}
        : 
        y^2 = x^3 - 3\big(2C_{3,\Delta}(s) + 1\big)x 
            + \big(C_{3,\Delta}(s)^2 + 10C_{3,\Delta}(s) - 2\big)
        .
    \]
    Then
    \(\EC_{3,\Delta,s}\) 
    has an efficient \(\FF_{p^2}\)-endomorphism
    \[
        \psi_{3,\Delta,s}: 
        (x,y) 
        \longmapsto
        \left(
            -\frac{x^p}{3}
            - \frac{4C_{3,\Delta}(s)^p}{x^p-3}
            - \frac{4C_{3,\Delta}(s)^{2p}}{3(x^p-3)^2}
            ,
            \frac{y^p}{\sqrt{-3}}
            \left(
                \frac{-1}{3} 
                + 
                \frac{4C_{3,\Delta}(s)^p}{(x^p-3)^2}
                +
                \frac{
                    8C_{3,\Delta}(s)^{2p}
                }{
                    3(x^p-3)^3
                }
            \right)
        \right)
    \]
    of degree \(3p\),
    such that
    \[
        \psi_{3,\Delta,s}^2 
        = 
        [\epsilon_p3]\pi_{\EC_{3,\Delta,s}}
        \qquad \text{ and } \qquad
        \psi_{3,\Delta,s} 
        = 
        \frac{1}{r}\left(\pi + \epsilon_p p\right)
    \]
    for some integer \(r\) satisfying 
    \(3r^2 = 2p + \epsilon_p\trace{\EC_{3,\Delta,s}}\).
    The characteristic polynomial of 
    \(\psi_{3,\Delta,s}\) 
    is \( P_{3,\Delta,s}(T) = T^2 - \epsilon_prT + 3p \).
    The twisted endomorphism \(\psi_{3,\Delta,s}'\)
    on \(\EC_{3,\Delta,s}'\)
    satisfies
    \((\psi_{3,\Delta,s}')^2 = [-\epsilon_p 3]\pi_{\EC_{3,\Delta,s}'}\),
    and 
    \(
        \psi_{3,\Delta,s}' 
        = 
        (-\pi_{\EC_{3,\Delta,s}'} + \epsilon_p p)/r
    \),
    and 
    \(P_{3,\Delta,s}(\psi_{3,\Delta,s}') = 0\).
\end{theorem}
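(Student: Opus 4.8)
The plan is to follow the template of Theorem~\ref{th:d2} verbatim, reducing everything to an application of Proposition~\ref{prop:eigenvalue} with $d=3$. The first step is to confirm that the curve in the statement really is the reduction of Hasegawa's $\QQ$-curve $\ECK_{3,\Delta,s}$: expanding $C_{3,\Delta}(s)=2(1+s\sqrt{\Delta})$ gives $2C_{3,\Delta}(s)+1 = 5+4s\sqrt{\Delta}$ and $C_{3,\Delta}(s)^2+10C_{3,\Delta}(s)-2 = 2(2s^2\Delta+14s\sqrt{\Delta}+11)$, so the two Weierstrass models agree coefficient-by-coefficient. Since $\Legendre{\Delta}{p}=-1$ makes $p$ inert in $\QQ(\sqrt{\Delta})$ and $\ECK_{3,\Delta,s}$ has good reduction for every $p>3$, reduction mod $p$ is well defined and yields $\EC_{3,\Delta,s}$ over $\FF_{p^2}=\FF_p(\sqrt{\Delta})$.

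Next I would produce the $3$-isogeny explicitly, exactly as in the degree-$2$ case. Its kernel is the order-$3$ subgroup $\subgrp{(3,2(1-s\sqrt{\Delta}))}$ cut out by $x-3$; applying V\'elu's formulae~\cite{Velu} to this subgroup and post-composing with the twist $\twistiso{1/\sqrt{-3}}$ lands on $\conj{\ECK_{3,\Delta,s}}$ and gives $\phiK_{3,\Delta,s}$ as an explicit rational map over $\QQ(\sqrt{\Delta},\sqrt{-3})$. Reducing mod $p$ produces $\phi_{3,\Delta,s}$, and $\psi_{3,\Delta,s}=\pi_0\circ\phi_{3,\Delta,s}$ has degree $\deg\pi_0\cdot\deg\phi_{3,\Delta,s}=3p$. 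The rational-map formula in the statement is obtained by applying $\pi_0$ to the output of the reduced V\'elu/twist expression: Frobenius distributes over the rational map, raising the variable and the $\FF_{p^2}$-coefficients to their $p$-th powers (whence the $C_{3,\Delta}(s)^p$ and $C_{3,\Delta}(s)^{2p}$ numerators) while fixing the $\FF_p$-rational constants $3,4,1/3$.

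It then remains to feed $d=3$ and the sign $\epsilon_p$ into Proposition~\ref{prop:eigenvalue}. The sign is determined, just as for degree $2$, by composing $\conj{\phiK_{3,\Delta,s}}\circ\phiK_{3,\Delta,s}$ and reading off whether it equals $[3]$ or $[-3]$, according to whether the extension of $\sigma$ fixes or negates $\sqrt{-3}$; over the residue field this is governed by $\Legendre{-3}{p}$, giving $\epsilon_p=-\Legendre{-3}{p}$ as recorded in Eq.~\eqref{eq:eps-3}. With $d=3$ and this $\epsilon_p$, Proposition~\ref{prop:eigenvalue} immediately yields every remaining claim: $\psi_{3,\Delta,s}^2=[\epsilon_p 3]\pi_{\EC_{3,\Delta,s}}$, the identity $\psi_{3,\Delta,s}=\tfrac1r(\pi+\epsilon_p p)$ with $3r^2=2p+\epsilon_p\trace{\EC_{3,\Delta,s}}$, the characteristic polynomial $T^2-\epsilon_p rT+3p$, and the parallel statements for $\psi_{3,\Delta,s}'$ on the quadratic twist.

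I expect the only real obstacle to be computational: carrying out the degree-$3$ V\'elu calculation and the twist by $1/\sqrt{-3}$ cleanly enough to recognise the output as the rational map displayed in the theorem. This is genuinely heavier than the degree-$2$ computation, since the isogeny now carries denominators up to $(x^p-3)^2$ in the $x$-coordinate and $(x^p-3)^3$ in the $y$-coordinate, and the numerators involve both $C_{3,\Delta}(s)^p$ and $C_{3,\Delta}(s)^{2p}$. Conceptually, however, there is nothing new: once the explicit isogeny and the sign $\epsilon_p$ are in hand, the theorem is a pure specialization of Proposition~\ref{prop:eigenvalue} at $d=3$.
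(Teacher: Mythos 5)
Your proposal is correct and is essentially the paper's own proof: the paper's argument for Theorem~\ref{th:d3} is exactly to reduce \(\ECK_{3,\Delta,s}\) and the V\'elu-plus-twist isogeny \(\phiK_{3,\Delta,s}\) modulo \(p\), compose with \(\pi_0\) as in \S\ref{sec:construction}, and invoke Proposition~\ref{prop:eigenvalue} with the sign from Eq.~\eqref{eq:eps-3}. Your added coefficient check (\(2C_{3,\Delta}(s)+1 = 5+4s\sqrt{\Delta}\), \(C_{3,\Delta}(s)^2+10C_{3,\Delta}(s)-2 = 2(2s^2\Delta+14s\sqrt{\Delta}+11)\)) and the identification of the kernel \(\subgrp{(3,2(1-s\sqrt{\Delta}))}\) are correct details that the paper leaves implicit.
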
 
\begin{proof}
    Reduce
    \(\ECK_{3,\Delta,s}\)
    and
    \(\phiK_{3,\Delta,s}\)
    mod \(p\),
    compose with \(\pi_0\)
    as in~\S\ref{sec:construction},
    and apply Proposition~\ref{prop:eigenvalue} 
    using Eq.~\eqref{eq:eps-3}.
    \qed
\end{proof}

\begin{proposition}
    \label{prop:j-d3}
    Theorem~\ref{th:d3} 
    yields at least \(p-8\) non-isomorphic curves
    (and counting quadratic twists, at least \(2p-16\)
    non-\(\FF_{p^2}\)-isomorphic curves)
    equipped with efficient endomorphisms.
\end{proposition}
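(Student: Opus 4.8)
The plan is to follow the argument of Proposition~\ref{prop:j-d2} step for step; the only genuine difference is that the degree-3 family forces somewhat heavier algebra. As there, it suffices to bound the number of coincidences among the $j$-invariants: if $s \mapsto j(\EC_{3,\Delta,s})$ takes at least $p-8$ distinct values in $\FF_{p^2}$ as $s$ ranges over $\FF_p$, then choosing one representative curve per value and adjoining its quadratic twist gives the stated $2p-16$ curves. These $2(p-8)$ curves are pairwise non-$\FF_{p^2}$-isomorphic because distinct $j$-invariants give curves that are non-isomorphic even over $\FFbar_p$, while a curve is never $\FF_{p^2}$-isomorphic to its quadratic twist (the two are related by twisting through a nonsquare $\mu$), exactly as in Proposition~\ref{prop:j-d2}.

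First I would compute $j(\EC_{3,\Delta,s})$ explicitly. Writing $C = C_{3,\Delta}(s) = 2(1 + s\sqrt{\Delta})$, the curve has $A = -3(2C+1)$ and $B = C^2 + 10C - 2$, so substituting into $j = 1728\cdot 4A^3/(4A^3 + 27B^2)$ expresses $j(\EC_{3,\Delta,s})$ as a rational function $N(s)/D(s)$ with $N, D \in \FF_{p^2}[s]$; since $B$ is quadratic in $C$, the denominator has degree $4$ and the numerator degree $3$ in $s$. A coincidence $j(\EC_{3,\Delta,s_1}) = j(\EC_{3,\Delta,s_2})$ with $s_1\neq s_2$ is then equivalent to the polynomial relation $N(s_1)D(s_2) - N(s_2)D(s_1) = 0$. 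This vanishes on the diagonal, so I would divide out $(s_1 - s_2)$ and split the resulting cofactor into its $\FF_p$-rational and $\sqrt{\Delta}$-parts, obtaining an identity of the shape $G_0(s_1,s_2) + \sqrt{\Delta}\,G_1(s_1,s_2) = 0$ with $G_0, G_1 \in \FF_p[s_1,s_2]$.

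The decisive step is the same separation-of-parts observation used for $F_0, F_1$ in Proposition~\ref{prop:j-d2}: because $s_1, s_2$ lie in $\FF_p$ while $\sqrt{\Delta} \notin \FF_p$, the relation above forces $G_0(s_1,s_2) = 0$ and $G_1(s_1,s_2) = 0$ simultaneously. It then remains to count the $\FF_p$-points of the system $G_0 = G_1 = 0$ lying off the diagonal. I would solve this explicitly---eliminating one variable through a resultant, or by a short Gr\"obner-basis computation---then discard the solutions that cannot be $\FF_p$-rational and halve the remaining count to pass from ordered pairs $(s_1,s_2)$ to unordered coincidences, just as in the degree-2 case. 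The expectation is that at most $8$ values of $s$ are merged by such coincidences, leaving at least $p-8$ distinct $j$-invariants.

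The real obstacle here is computational rather than conceptual. Since $B = C^2 + 10C - 2$ is quadratic in $C$---in contrast to the simpler shape of the degree-2 family---the polynomials $G_0$ and $G_1$ have appreciably higher degree than their analogues $F_0, F_1$, so solving $G_0 = G_1 = 0$ by hand is delicate and I would expect to lean on a computer algebra system. The one point that must be checked with care is that $G_0$ and $G_1$ have no nonconstant common factor: were they to share one, the coincidence locus would be a curve rather than a finite set and the bound would collapse. Once their resultant (equivalently, their gcd) is confirmed to be trivial, a B\'ezout/elimination degree estimate, or a direct enumeration of the finitely many common zeros, pins the number of off-diagonal $\FF_p$-coincidences down to the constant $8$.
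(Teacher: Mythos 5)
Your proposal is correct and follows essentially the same route as the paper: the paper's own proof of Proposition~\ref{prop:j-d3} is literally the single sentence ``The proof is exactly as for Proposition~\ref{prop:j-d2},'' and your plan---bounding $j$-invariant collisions by splitting the coincidence relation into its $\FF_p$-rational and $\sqrt{\Delta}$-parts, solving the resulting system $G_0 = G_1 = 0$ off the diagonal, discarding non-$\FF_p$-rational solutions, and halving to count unordered pairs---is precisely that argument carried over to the degree-3 family. Your added caution about verifying that $G_0$ and $G_1$ share no nonconstant common factor is a point the paper leaves implicit, but it does not change the approach.
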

\begin{proof}
    The proof is exactly as for Proposition~\ref{prop:j-d2}.
    \qed
\end{proof}

\section{
    Cryptographic-sized curves
}
\label{sec:crypto-params}

We will now exhibit some curves with our families with cryptographic
parameter sizes, and secure and twist-secure group orders.
We computed the curve orders below
using Magma's implementation of the Schoof--Elkies--Atkin
algorithm~\cite{Schoof,Magma,Magma-Handbook}.

First,
consider the degree-2 curves of \S\ref{sec:degree-2}.
By definition,
\(\EC_{2,\Delta,s}\) and its quadratic twist \(\EC_{2,\Delta,s}'\) 
have points of order 2 over \(\FF_{p^2}\):
they generate the kernels of our endomorphisms.
If \(p \equiv 2\pmod{3}\),
then 
\(2r^2 = 2p + \epsilon_p \trace{\EC}\)
implies \(\trace{\EC} \not\equiv 0 \pmod{3}\),
so 
when \(p \equiv 2\pmod{3}\) either
\(p^2 - \trace{\EC} + 1 = \#\EC_{2,\Delta,s}(\FF_{p^2})\)
or \(p^2 + \trace{\EC} + 1 = \#\EC_{2,\Delta,s}'(\FF_{p^2})\)
is divisible by 3.
However,
when \(p \equiv 1\pmod{3}\)
we can hope to find curves of order twice a prime
whose twist also has order twice a prime.

 
\begin{example}
    Let \(p = 2^{80}-93\) and \(\Delta = 2\).
    For
    \(s = 4556\), we find a twist-secure curve: 
    \(
        \#\EC_{2,2,4556}(\FF_{p^2}) = 2N
    \)
    and 
    \(
        \#\EC_{2,2,4556}'(\FF_{p^2}) = 2N'
    \)
    where
     \begin{align*}
         N  &= 730750818665451459101729015265709251634505119843 \quad
         \text{and}\\
         N' &= 730750818665451459101730957248125446994932083047
     \end{align*}
    are 159-bit primes.
    Proposition~\ref{prop:bitlength}
    lets us replace 160-bit scalar multiplications in 
    \(\EC_{2,2,4556}(\FF_{p^2})\)
    and 
    \(\EC_{2,2,4556}'(\FF_{p^2})\)
    with 80-bit multiexponentiations.
\end{example}


Now, consider the degree-3 curves of \S\ref{sec:degree-3}.
The order of \(\EC_{3,\Delta,s}(\FF_{p^2})\) is always divisible by \(3\):
the kernel of \(\psi_{3,\Delta,s}\)
is generated by the rational point
\((3,C_{3,\Delta}(s))\).
However, on the quadratic twist, 
the nontrivial points in the kernel of \(\psi_{3,\Delta,s}'\) 
are \emph{not} defined over \(\FF_{p^2}\) (they are conjugates over
\(\FF_{p^2}\)),
so \(\EC_{3,\Delta,s}'(\FF_{p^2})\) can have prime order.

\begin{example}
    Let \(p = 2^{127}-1\);
    then \(\Delta = -1\) is a nonsquare in \(\FF_p\).
    The parameter value \(s =
    122912611041315220011572494331480107107\)
    yields 
    \[
        \#\EC_{3,-1,s}(\FF_{p^2}) = 3\cdot N
        \qquad
        \text{ and }
        \qquad
        \#\EC_{3,-1,s}'(\FF_{p^2}) = N' 
        ,
    \]
    where \(N\) is a 253-bit prime
    and \(N'\) is a 254-bit prime.
    Using Proposition~\ref{prop:bitlength},
    any scalar multiplication 
    in \(\EC_{3,-1,s}(\FF_{p^2})\) 
    or \(\EC_{3,-1,s}'(\FF_{p^2})\) 
    can be computed via a 127-bit multiexponentiation.
\end{example}

\begin{example}
    Let \(p = 2^{255}-19\); then \(\Delta = -2\) is a nonsquare in
    \(\FF_p\).
    The parameter
    \(s =
    \mathtt{0x7516D419C4937E5E8F0761FDB9BB0382FE20E9D0B7AB6924BA1DA02561C5145E}
    \)
    \\
    yields \(\#\EC_{3,-2,s}(\FF_{p^2}) = 3\cdot N\)
    and \(\#\EC_{3,-2,s}(\FF_{p^2}) = N'\),
    where \(N\) and \(N'\) are 509- and 510-bit primes, respectively.
    Proposition~\ref{prop:bitlength}
    transforms any 510-bit scalar multiplication
    in \(\EC_{3,-2,s}(\FF_{p^2})\)
    or \(\EC_{2,-2,s}'(\FF_{p^2})\) 
    into a 255-bit multiexponentiation.
\end{example}

\section{
    Montgomery, Twisted Edwards, and Doche--Icart--Kohel models 
}
\label{sec:models}

\smallskip
\noindent
\underline{\emph{Montgomery models.}}\ \ 
The curve \(\EC_{2,\Delta,s}\)
has a Montgomery model over \(\FF_{p^2}\)
if and only if \(2C_{2,\Delta}(s)\) 
is a square in \(\FF_{p^2}\)
(by~\cite[Proposition~1]{OKS}):
in that case, 
setting
\[
    B_{2,\Delta}(s) := \sqrt{2C_{2,\Delta}(s)} 
    \qquad \text{ and } \qquad 
    A_{2,\Delta}(s) = 12/B_{2,\Delta}(s) 
    ,
\]
the birational mapping
\( 
    (x,y) 
    \mapsto 
    (X/Z,Y/Z) 
    =
    \left((x-4)/B_{2,\Delta}(s),y/B_{2,\Delta}(s)^2\right) 
\)
takes us from \(\EC_{2,\Delta,s}\) to the projective Montgomery model
\begin{equation}
    \label{eq:Montgomery}
    \EC_{2,\Delta,s}^\mathrm{M}
    : 
    B_{2,\Delta}(s)Y^2Z = X\left(X^2 + A_{2,\Delta}(s)XZ + Z^2\right) 
    .
\end{equation}
(If \(2C_{2,\Delta}(s)\) is not a square,
then 
\(\EC_{2,\Delta,s}^\mathrm{M}\)
is \(\FF_{p^2}\)-isomorphic to the quadratic twist
\(\EC_{2,\Delta,s}'\).)
These models offer a particularly efficient arithmetic,
where we use only the \(X\) and \(Z\) coordinates~\cite{Montgomery}.
The endomorphism is defined (on the \(X\) and \(Z\) coordinates) 
by
\[
    \psi_{2,\Delta,s}
    :
    (X:Z)
    \longmapsto
    (
        X^{2p} + A_{2,\Delta}(s)^pX^pZ^p + Z^{2p} 
        :
        -2B_{2,\Delta}(s)^{1-p}X^pZ^p
    )
    .
\]


\smallskip
\noindent
\underline{\emph{Twisted Edwards models.}}
\ \ Every Montgomery model corresponds to a twisted Edwards model
(and vice versa) \cite{BBJLP,Hisil--Wong--Carter--Dawson}.
With \(u = X/Z\) and \(v = Y/Z\),
the birational maps
\[
    (u,v) 
    \longmapsto 
    (x_1,x_2) 
    = 
    \left( \frac{u}{v},\frac{u-1}{u+1} \right)
    \qquad
    \text{ and }
    \qquad
    (x_1,x_2)
    \longmapsto
    (u,v) 
    =
    \left(
        \frac{1+x_2}{1-x_2},\frac{1+x_2}{x_1(1-x_2)}
    \right)
\]
take us between the Montgomery model of
Eq.~\eqref{eq:Montgomery}
and the twisted Edwards model
\[
    \EC_{2,\Delta,s}^{\textrm{TE}}:
    a_2(s)x_1^2 + x_2^2 = 1 + d_2(s)x_1^2x_2^2 ,
    \quad
    \text{ where }
    \quad
    \begin{cases}
        a_2(s) & = (A_{2,\Delta}(s) + 2)/B_{2,\Delta}(s) \\
        d_2(s) & = (A_{2,\Delta}(s) - 2)/B_{2,\Delta}(s) .
    \end{cases}
\]

\noindent
\underline{\emph{Doche--Icart--Kohel models.}}\ \ 
Doubling-oriented Doche--Icart--Kohel models of elliptic curves
are defined by equations of the form
\[
    \label{eq:2-DIK}
    y^2 = x(x^2 + D x + 16D) .
\]
These curves have a rational \(2\)-isogeny \(\phi\),
with kernel \(\subgrp{(0,0)}\);
in this form, we can double more quickly
by using the decomposition \([2] = \dualof{\phi}{\phi}\) 
(see~\cite[\S3.1]{DIK} for details).

Our curves \(\EC_{2,\Delta,s}\) come equipped with a rational \(2\)-isogeny,
so it is natural to try putting them in Doche--Icart--Kohel form.
The isomorphism
\[
    \alpha:
    (x,y) 
    \longmapsto 
    (u,v) 
    = 
    \left(
        \mu^2(x+4),
        \mu^3y
    \right)
    \quad
    \text{ with }
    \quad
    \mu = 4\sqrt{6/C_{2,\Delta}(s)}
\]
takes us from
\(\EC_{2,\Delta,s}\) 
into
a doubling-oriented Doche--Icart--Kohel model
\[
    \EC_{2,\Delta,s}^\mathrm{DIK} 
    : 
    v^2 = u\left(u^2 + D_{2,\Delta}(s)u + 16D_{2,\Delta}(s)\right)
    ,
    \quad
    \text{ where }
    \quad 
    D_{2,\Delta}(s) 
    =
    2^7/(1 + s\sqrt{\Delta})
    .
\]
While \(\EC_{2,\Delta,s}^\mathrm{DIK}\)
is defined over \(\FF_{p^2}\),
the isomorphism 
is only defined over~\(\FF_{p^2}(\sqrt{1 + s\sqrt{\Delta}})\);
so if \(1 + s\sqrt{\Delta}\) is not a square in \(\FF_{p^2}\)
then \(\EC_{2,\Delta,s}^\mathrm{DIK}\) 
is 
\(\FF_{p^2}\)-isomorphic to \(\EC_{2,\Delta,s}'\).

The endomorphism 
\(
    \psi_{2,\Delta,s}^\mathrm{DIK} := \alpha\psi_{2,\Delta,s}\alpha^{-1}
\) 
is \(\FFbar_{p}\)-isomorphic to the Doche--Icart--Kohel isogeny,
since they have the same kernel.
The eigenvalue of \(\psi_{2,\Delta,s}\) on cryptographic subgroups
is \(\pm\sqrt{\pm2}\),
so computing \([m]P\) as \([a]P + [b]\psi_{2,\Delta,s}^\mathrm{DIK}\)
with Doche--Icart--Kohel doubling for \([a]\) and \([b]\)
is like using a \(\sqrt{\pm2}\)-adic expansion of \(m\).

Similarly,
we can exploit the rational \(3\)-isogeny on \(\EC_{3,\Delta,s}\)
for Doche--Icart--Kohel tripling (see~\cite[\S3.2]{DIK}).
The isomorphism
\(
    (x,y)
    \mapsto
    (u,v)
    =
    \left(
        a_{3,\Delta}(s)(x/3-1) , b_{3,\Delta}(s)^3y
    \right)
\),
with
\(a_{3,\Delta}(s) = 9/C_{3,\Delta}(s)\)
and \(b_{3,\Delta}(s) = a_{3,\Delta}(s)^{-1/2}\), 
takes us from \(\EC_{3,\Delta,s}\)
to the tripling-oriented Doche--Icart-Kohel model
\[
    \EC_{3,\Delta,s}^\mathrm{DIK} 
    : 
    v^2 = u^3 + 3a_{3,\Delta}(s)(u + 1)^2 
    .
\]

\section{
    Degree one: GLS as a degenerate case
}
\label{sec:GLS}

Returning to the framework of \S\ref{sec:construction},
suppose \(\ECK\) is a curve defined over \(\QQ\),
and base-extended to \(\QQ(\sqrt{D})\):
then \(\ECK = \conj{\ECK}\),
and we can apply the construction of \S\ref{sec:construction}
taking
\(\phiK: \ECK \to \conj{\ECK}\) to be the identity map.
Reducing modulo an inert prime \(p\),
the endomorphism \(\psi\) is nothing but \(\pi_0\)
(which is an endomorphism, since \(\EC\) is a subfield curve).
We have \(\psi^2 = \pi_0^2 = \pi_{\EC}\),
so the eigenvalue of \(\psi\) is \(\pm1\) 
on cryptographic subgroups of
\(\EC(\FF_{p^2})\). 
Clearly, this endomorphism is of no use to us
for scalar decompositions.

However, looking at the quadratic twist \(\EC'\), 
the twisted endomorphism \(\psi'\)
satisfies \((\psi')^2 = -\pi_{\EC'}\);
the eigenvalue of \(\psi'\) on cryptographic subgroups
is a square root of \(-1\).
We have recovered the Galbraith--Lin--Scott endomorphism
(cf.~\cite[Theorem~2]{GLS}).

More generally,
suppose \(\phiK: \ECK \to \conj{\ECK}\)
is a \(\QQbar\)-isomorphism:
that is, an isogeny of degree \(1\).
If \(\ECK\) does not have CM,
then \(\conj{\phiK} = \epsilon_p\phiK^{-1}\),
so \(\psi^2 = [\epsilon_p]\pi_{\EC}\) with \(\epsilon_p = \pm1\).
This situation is isomorphic to GLS.
In fact, \(\ECK \cong \conj{\ECK}\) 
implies \(j(\ECK) = j(\conj{\ECK}) = \conj{j(\ECK)}\),
so \(j(\ECK)\) is in \(\QQ\),
and \(\ECK\) is isomorphic to (or a quadratic twist of)
a curve defined over~\(\QQ\).
We note that in the case \(d = 1\),
we have \(r = \pm t_0\) in Proposition~\ref{prop:eigenvalue},
and the basis constructed in the proof of Proposition~\ref{prop:bitlength}
is (up to sign) the same as the basis of~\cite[Lemma~3]{GLS}.

While \(\EC'(\FF_{p^2})\) may have prime order,
\(\EC(\FF_{p^2})\) cannot:
the points fixed by \(\pi_0\) form a subgroup of order \(p + 1 - t_0\),
where \(t_0^2 - 2p = \trace{\EC}\)
(the complementary subgroup, where \(\pi_0\) has eigenvalue
\(-1\), has order \(p + 1 + t_0\)).
We see that the largest prime divisor of \(\#\EC(\FF_{p^2})\)
can be no larger than \(\Oh(p)\).
If we are in a position to apply the Fouque--Lercier--R\'eal--Valette
fault attack~\cite{Fouque--Lercier--Real--Valette}---for example,
if Montgomery ladders are used for scalar multiplication and
multiexponentiation---then we can solve DLP instances in \(\EC'(\FF_{p^2})\)
in \(\Oh(p^{1/2})\) group operations (in the worst case!).
While \(\Oh(p^{1/2})\) is still exponentially difficult,
it falls far short of the ideal \(\Oh(p)\)
for general curves over~\(\FF_{p^2}\).
GLS curves should therefore be avoided where the fault attack can be
put into practice.

\section{
    Higher degrees
}
\label{sec:further}

We conclude with some brief remarks on \(\QQ\)-curves of other small
degrees.
Hasegawa provides a universal curve for \(d = 7\) (and any \(\Delta\)) 
in~\cite[Theorem 2.2]{Hasegawa},
and our results for \(d = 2\) and \(d = 3\) carry over to \(d = 7\)
in an identical fashion,
though the endomorphism is slightly less efficient in this case
(its defining polynomials are sextic). 

For \(d = 5\), Hasegawa notes that it is impossible to give a universal
\(\QQ\)-curve for every discriminant \(\Delta\):
there exists a quadratic \(\QQ\)-curve of degree \(5\) 
over \(\QQ(\sqrt{\Delta})\)
if and only if 
\( \Legendre{5}{p_i} = 1 \)
for every prime  \(p_i \not= 5\) dividing \(\Delta\)
\cite[Proposition 2.3]{Hasegawa}.
But this is no problem when reducing modulo \(p\), if we are prepared to
give up the freedom of choosing \(\Delta\):
we could take
\( \Delta = -11\) for \(p \equiv 1 \pmod{4}\)
and \(\Delta = -1 \) for \(p \equiv 3 \pmod{4}\),
and then use the curves defined in~\cite[Table 6]{Hasegawa}.

Composite degree \(\QQ\)-curves (such as \(d = 6\) and \(10\))
promise more interesting results,
as do exceptional CM specializations of the universal curves;
we will return to these cases in future work.
Degrees greater than 10 yield less efficient endomorphisms,
and so are less interesting from a practical point of view.

\subsubsection{Acknowledgements}
The author thanks Fran\c{c}ois Morain and David Gruenewald
for their comments.

\end{document}